\mathchardef\mhyphen="2D
\newcommand{\B}{{\bf B}}
\newcommand{\qi}{\bf{i}}
\newcommand{\qj}{\bf{j}}
\newcommand{\qk}{\bf{k}}
\spnewtheorem{thm}{Theorem}{\bfseries}{\itshape}
\spnewtheorem{lem}[thm]{Lemma}{\bfseries}{\itshape}
\spnewtheorem{cor}[thm]{Corollary}{\bfseries}{\itshape}
\spnewtheorem{defn}{Definition}{\bfseries}{\rmfamily}
\spnewtheorem{exa}{Example}{\bfseries}{\rmfamily}
\spnewtheorem{rem}[thm]{Remark}{\bfseries}{\rmfamily}
\title{Iterative Methods for Computing the Moore--Penrose Pseudoinverse of Quaternion Matrices, with Applications}
\author{Valentin Leplat \and Salman Ahmadi-Asl \and JunJun Pan \and Ning Zheng}
\institute{
V.~Leplat \at
Innopolis University, 420500 Innopolis, Russia \\
\email{v.leplat@innopolis.ru}
\and
S.~Ahmadi-Asl \at
Lab of Machine Learning and Knowledge Representation, Innopolis University, 420500 Innopolis, Russia \\
\email{s.ahmadiasl@innopolis.ru}
\and
J.~Pan \at
Department of Mathematics, Hong Kong Baptist University, Hong Kong, China \\
\email{junjpan@hkbu.edu.hk}
\and
N.~Zheng \at
School of Mathematical Sciences; Key Laboratory of Intelligent Computing and Applications (Ministry of Education), Tongji University, Shanghai 200092, China \\
\email{nzheng@tongji.edu.cn}
}
\date{Received: date / Accepted: date}
\begin{document}

\maketitle

\begin{abstract}
We develop quaternion--native iterative methods for computing the Moore--Penrose (MP) pseudoinverse of quaternion matrices and analyze their convergence. Our starting point is a damped Newton--Schulz (NS) iteration tailored to noncommutativity: we enforce the appropriate left/right identities for rectangular inputs and prove convergence directly in $\mathbb{H}$ under a simple spectral scaling. We then derive higher--order (\emph{hyperpower}) NS schemes with exact residual recurrences that yield order-$p$ local convergence, together with factorizations that reduce the number of $s\times s$ quaternion products per iteration. Beyond NS, we introduce a randomized sketch--and--project method (RSP--Q), a hybrid RSP+NS scheme that interleaves inexpensive randomized projections with an exact hyperpower step, and a matrix--form conjugate gradient on the normal equations (CGNE--Q). All algorithms operate directly in $\mathbb{H}$ (no real or complex embeddings) and are matrix--free. 
Numerically, we test the performance of the proposed algorithms on controlled synthetic matrices. In three application case studies (CUR image/video completion, Lorenz filtering, FFT-based deblurring) we deploy only the NS family, which achieves state-of-the-art accuracy with the lowest wall time. These quaternion--native, matrix-free methods are suitable as drop-in solvers for large-scale quaternion inverse problems.
\keywords{Quaternion matrices \and Moore--Penrose pseudoinverse \and Newton--Schulz iteration \and hyperpower methods \and quaternion linear algebra}
\subclass{65F20 \and 65F10 \and 15A09 \and 15B33}
\end{abstract}


\section{Introduction}
\label{sec:intro}
Quaternions are a number system that extends complex numbers, consisting of one real part and three imaginary parts. They were introduced by William Rowan Hamilton and are typically represented as a combination of these components, where the imaginary units have specific multiplication rules that lead to non-commutative behavior. This means that the order in which quaternions are multiplied matters, which is a significant departure from the properties of real and complex numbers. 

Extending the classical matrix decompositions  and concepts from complex matrices to quaternion matrices has been an active research topic started from \cite{wolf1936similarity,brown1993matrices} and the seminal paper by Zhang \cite{zhang1997quaternions} has relaunched much research on the subject. 
For example, see \cite{bunse1989quaternion} and  \cite{le2004singular} for the QR and SVD of quaternions, respectively. There are several books on quaternions, each focusing on a particular aspect of this extended number system. For theoretical aspects of quaternion linear algebra, we refer to \cite{rodman2014topics}; for scientific aspects, the book \cite{wei2018quaternion}  provides nice algorithmic materials; for signal processing applications see \cite{ell2014quaternion}; for computer graphic applications see \cite{goldman2022rethinking,vince2011quaternions}; for arithmetic aspects of quaternion see \cite{voight2014arithmetic}.

From scientific computing perspective, the idea of so called structure preserving has been used to accelerate the quaternion computations by converting all computations in real or complex domain and benefit from efficient matrix operations in BLAS-3, we refer to \cite{jia2021structure,jia2013new,hu2025structure,li2023structure,chen2023efficient,li2024structure} for such ideas. Moreover, solving linear systems of equations over quaternions and their special solutions  (e.g., Hermitian) have found intensive interest in last decades, see \cite{ahmadi2017iterative,ahmadi2017efficient,wang2009system,yuan2016structured,zhang2015special,wang2008iterative,wang2009system} and the references therein. Recently, the randomization idea \cite{halko2011finding} was extended to quaternion matrices \cite{ren2022randomized,liu2022randomized,chen2025randomized,lyu2024randomized,liu2024fixed,ahmadi2025pass,chang2024randomized}. 


Quaternions find extensive applications in various fields, particularly in computer graphics \cite{vince2011quaternions}, robotics \cite{shoemake1985animating,feng2017new}, aerospace engineering \cite{wie1989quarternion,forbes2015fundamentals}, physics simulations \cite{grassia1998practical}, quantum computing \cite{adler1995quaternionic,finkelstein1962foundations}, and machine learning \cite{zhang2019quaternion,parcollet2019quaternion}. In computer graphics , they are used to represent rotations smoothly and avoid issues like gimbal lock that can occur with other rotation representations. In robotics, quaternions help describe the orientation of robotic arms and mobile robots, facilitating accurate control and movement. Similarly, in aerospace engineering, they are crucial for attitude control systems in aircraft and spacecraft.

The advantages of using quaternions include their compact representation, which requires fewer components compared to rotation matrices, and their efficiency in computations. As technology advances and the demand for precise 3D modeling and simulations increases, the importance and relevance of quaternions continue to grow across various domains. Their unique properties and versatility make them an essential tool for anyone working with three-dimensional transformations and orientations.
 

In this paper, we will study the Moore-Penrose pseudoinverse of quaternion matrices. 
It is well known that in both the real and complex domains, the Moore-Penrose (MP) pseudoinverse serves as a generalization of the matrix inverse, applicable to nonsquare or singular matrices where traditional inverses do not exist. By providing a unique solution that minimizes error in least-squares problems, the pseudoinverse enables robust handling of overdetermined or underdetermined systems of linear equations. This makes it a fundamental problem in linear algebra with significant applications in mathematics, engineering, and data science. To compute the MP pseudoinverse, the most robust and widely used method is the Singular Value Decomposition (SVD). Unlike closed-form (left/right pseudoinverse), which requires the matrix to have full column rank or row rank, SVD is stable almost for any matrix rank.  Even so,  its high computational cost and intensive memory requirements limit its application to large matrices. Other methods like QR decomposition and rank decomposition, though they are more efficient than the SVD, their performance degrades for large or sparse matrices and may be as costly as the SVD. Therefore, to improve computational efficiency, iterative methods such as Newton-style recursion (which has quadratic convergence) have been proposed. These iterative approaches can avoid full matrix decomposition and are more adaptable to parallel and distributed computing. 

While the MP pseudoinverse is classically defined over real or complex fields, its extension to quaternion matrices requires careful handling due to the non-commutativity in quaternion multiplication. Research on the MP pseudoinverse for quaternion matrices remains relatively limited. In \cite{song2011cramer}, the authors developed determinantal representations for generalized inverses over the quaternion skew field by leveraging the theory of column and row determinants; however, they did not propose computational algorithms. In \cite{huang2015moore}, the concept of MP inverses was extended to quaternion polynomial matrices. The authors adapted the Leverrier-Faddeev algorithm for these matrices but found it inefficient, leading them to propose a more effective method based on interpolation for computing the MP inverse of quaternion polynomial matrices.
More recently, \cite{bhadala2025generalized} investigated both theoretical and computational aspects of generalized inverses for quaternion matrices, establishing a comprehensive framework for computing various generalized inverses, including the MP inverse, group inverse, Drazin inverse, and other constrained inverses with specified range and null space properties. They introduced two algorithms: one is direct method utilizing the Quaternion Toolbox for MATLAB (QTFM) and another based on a complex structure-preserving method that relies on the complex representation of quaternion matrices. 

For the MP pseudo-inverse of quaternion matrices, effective iterative methods, such as Newton-type recursion, have not been well studied, which serves as the primary motivation for this paper.

\subsection{Contributions and outline of this paper}
This paper will study iterative methods for the Moore-Penrose pseudoinverse of quaternion matrices. Our contributions are summarized as follows. 

\begin{enumerate}
\item \textbf{Quaternion--native NS for pseudoinverse.} We formulate damped Newton--Schulz updates that respect noncommutativity and target the appropriate left/right identities for rectangular inputs. A single scale \(X_0=\alpha A^H\) with \(\alpha\in(0,2/\|A\|_2^2)\) guarantees \(\|I-AX_0\|_2<1\) or \(\|I-X_0A\|_2<1\) in the full-rank cases. Here, \(A\in\mathbb{H}^{m\times n}\) is the matrix whose Moore--Penrose pseudoinverse we seek, \(X_0\) is the initial iterate, and \(A^H\) denotes the quaternionic adjoint (conjugate transpose) of \(A\).

\item \textbf{Convergence theory in $\mathbb{H}$.} We prove operator-norm convergence to \(A^\dagger\) for full column/row rank and give exact residual recurrences (linear--then--quadratic when damped; quadratic when undamped).
\item \textbf{Higher--order (hyperpower) schemes.} We extend Newton--Schulz (NS) to order-$p$ updates ($p\!\ge2$) with exact recurrences $E_{k+1}=E_k^{\,p}$ and $F_{k+1}=F_k^{\,p}$, together with cost--saving factorizations.
\item \textbf{Randomized and hybrid solvers; CG in matrix form.} We propose a quaternion--native randomized sketch--and--project method (RSP--Q), a hybrid RSP+NS scheme (randomized steps plus an exact hyperpower step), and a matrix--form CGNE--Q that uses only multiplies by $A$ and $A^H$.
\item \textbf{Comprehensive benchmarks and findings.} Across random matrices and three applications, we show NS remains best in accuracy/CPU; higher--order NS offers similar accuracy at slightly higher time; the hybrid is competitive; CGNE--Q is promising; and RSP--Q achieves acceptable accuracy but can be slower due to repeated thin QR or small SPD Gram solves for sketched pseudo--inverses.
\end{enumerate}

Section~\ref{sec:prelim} briefly reviews notations and quaternion linear algebra.
Section~\ref{sec:NS-preview} introduces the NS iterations and explains side conventions.
Section~\ref{sec:theory-NSQ} gives the convergence theory for the damped and higher--order methods.
Practical applications are discussed in Section~\ref{sec:applications}, 
with numerical experiments in Section~\ref{sec:experiments}. Section~\ref{sec:extended-algos} introduces further iterative schemes, including a randomized approach and a conjugate gradient variant, and provides additional benchmark comparisons.
Section~\ref{sec:conclu} concludes and sketches directions for future work.

\section{Preliminaries}\label{sec:prelim}


We collect the quaternion and quaternion–matrix facts used throughout. We work over the skew field of quaternions
\[
\mathbb{H}=\{\,q=a+b\mathbf{i}+c\mathbf{j}+d\mathbf{k}\ :\ a,b,c,d\in\mathbb{R}\,\},
\]


with multiplication rules
\(
\mathbf{i}^2=\mathbf{j}^2=\mathbf{k}^2=\mathbf{i}\mathbf{j}\mathbf{k}=-1
\)
and
\(
\mathbf{i}\mathbf{j}=\mathbf{k},\ \mathbf{j}\mathbf{k}=\mathbf{i},\ \mathbf{k}\mathbf{i}=\mathbf{j}
\),
together with anti-commutation
\(
\mathbf{j}\mathbf{i}=-\mathbf{k},\ \mathbf{k}\mathbf{j}=-\mathbf{i},\ \mathbf{i}\mathbf{k}=-\mathbf{j}.
\)
Conjugation is $\overline{q}=a-b\mathbf{i}-c\mathbf{j}-d\mathbf{k}$, the (Euclidean) norm is $|q|=\sqrt{q\overline{q}}$, and $q^{-1}=\overline{q}/|q|^2$ for $q\neq0$.

\paragraph{Quaternion matrices and adjoint}
For $A\in\mathbb{H}^{m\times n}$ we write $A^H=\overline{A}^{\top}$ (quaternionic conjugate transpose). A matrix is \emph{Hermitian} if $A=A^H$ and \emph{unitary} if $A^H A=I$. Throughout, vectors are columns and the ambient space $\mathbb{H}^n$ is a right $\mathbb{H}$–module with inner product
\(
\langle x,y\rangle = x^H y\in\mathbb{H}
\)
(conjugate–linear in the first argument, linear in the second), so that $\|x\|_2=\sqrt{\mathrm{Re}\,\langle x,x\rangle}$.
The induced operator norm is
\[
\|A\|_2=\sup_{\|x\|_2=1}\|Ax\|_2,
\qquad\text{and it is submultiplicative: }\ \|AB\|_2\le\|A\|_2\|B\|_2.
\]
As in the complex case, quaternionic Hermitian matrices admit a real spectral calculus: their eigenvalues are real; if $A=A^H$ then there exists a unitary $U$ with $A=U\Lambda U^H$ (with real diagonal $\Lambda$). The quaternion singular value decomposition (QSVD) also holds:
\[
A=U\Sigma V^H: \qquad \Sigma\in \mathbb{R}^{m\times n}_+, \ U\in\mathbb{H}^{m\times m} \ \text{and~} \ V\in\mathbb{H}^{n\times n}\ \text{ are unitary}.
\]


\paragraph{Range, projectors, and Gram matrices}
For $A\in\mathbb{H}^{m\times n}$, the Gram matrices $A^H A$ and $A A^H$ are Hermitian positive semidefinite (HPSD). If $A$ has full column rank ($m\ge n$), then $A^H A\succ0$; if $A$ has full row rank ($m\le n$), then $A A^H\succ0$. We denote by
\(
P=AA^\dagger
\)
and
\(
Q=A^\dagger A
\)
the orthogonal projectors onto the ranges $\mathcal{R}(A)$ and $\mathcal{R}(A^H)$, respectively; both are Hermitian idempotents.

\paragraph{Moore–Penrose pseudoinverse over $\mathbb{H}$}
For $A\in\mathbb{H}^{m\times n}$, the Moore–Penrose pseudoinverse $A^\dagger\in\mathbb{H}^{n\times m}$ is the unique matrix satisfying the four Penrose equations:
\begin{align}\label{def:MP-inverse}
AA^\dagger A = A, \qquad
A^\dagger A A^\dagger = A^\dagger, \qquad
(AA^\dagger)^H = AA^\dagger, \qquad
(A^\dagger A)^H = A^\dagger A.
\end{align}
It is known that the MP inverse of a matrix can be computed using its SVD \cite{golub2013matrix} which is called SVD based method\footnote{In this paper, we refer to this approach as basic method.}.  More precisely, consider the SVD of the matrix \( A \) (of size \( m \times n \))  as:
\[
A = U \Sigma V^H.
\]
Then, the MP inverse of $A$ can be computed as $A^{\dag}=V\Sigma^{\dagger} U^H$, where 
\[
\Sigma^+_{i,j} = 
\begin{cases}
\frac{1}{\sigma_i} & \text{if } i = j \text{ and } \sigma_i \neq 0, \\
0 & \text{otherwise}.
\end{cases}
\]
For the full–rank rectangular cases, the familiar formulas hold unchanged:
\[
\text{(full column rank)}\quad A^\dagger=(A^H A)^{-1}A^H;
\qquad
\text{(full row rank)}\quad A^\dagger=A^H (A A^H)^{-1}.
\]
These follow because $A^H A$ (resp.\ $A A^H$) is Hermitian positive definite and hence invertible in $\mathbb{H}$.

\begin{rem}{(Real/complex embeddings)}
The real embedding $\mathcal{X}(q)$  and complex embedding  $\mathcal{Y}(q)$  represent any quaternion $q=a+b\mathbf{i}+c\mathbf{j}+d\mathbf{k} \in\mathbb{H}$  respectively by
\begin{equation}
  \mathcal{X}(q)=\begin{bmatrix}
a & -b & -c & -d\\
b &  a & -d &  c\\
c &  d &  a & -b\\
d & -c &  b &  a
\end{bmatrix},
\quad   \mathcal{Y}(q)=\begin{bmatrix}
a+b\mathbf{i} & c+d\mathbf{i}\\
-c+d\mathbf{i} &   a -b\mathbf{i}
\end{bmatrix}.
\end{equation}

Stacking these blocks yields a $4m\times 4n$ real embedding or  $2m\times 2n$ complex embedding of $A\in\mathbb{H}^{m\times n}$. Our approach avoids such embeddings, with all algorithms and analyses developed directly within the quaternion framework.

\end{rem}

\paragraph{Notation and side conventions}
The noncommutative property of quaternions makes the order of multiplication critical. In this paper, we consistently formulate linear systems as \emph{right} systems, where unknowns are multiplied on the right, and we explicitly preserve the multiplication order in all formulas.

\vspace{0.5em}

In the next section, we will focus on adapting the Newton-Schulz family of iterative methods to the quaternion framework, ensuring preservation of the left and right multiplication order.

\section{Newton--Schulz iterations for quaternion pseudoinverses }\label{sec:NS-preview}
The classical Newton–Schulz (NS) iteration for an inverse updates $X_{k+1}=X_k(2I-AX_k)$ and converges quadratically for $X_0$ close to $A^{-1}$. For pseudoinverses, the correct quaternion–native generalization depends on the shape of $A$ and on the side on which we enforce the identity:

\begin{itemize}
\item \textbf{Square/invertible case:} with $X_0=\alpha A^H$ and $\alpha\in(0,2/\|A\|_2^2)$,
\[
X_{k+1}=X_k(2I-AX_k)\qquad\Rightarrow\qquad \|I-AX_{k+1}\|_2=\|I-AX_k\|_2^2.
\]
\item \textbf{Full column rank ($m\ge n$):} enforce $X_k A\to I_n$ via the \emph{right} deviation
\(
F_k=I_n-X_k A
\)
and use the damped NS step
\[
X_{k+1}=X_k-\gamma\,(X_kA-I_n)\,X_k,\qquad 0<\gamma\le1.
\]
This yields the exact recursion $F_{k+1}=(1-\gamma)F_k+\gamma F_k^2$ and drives $AX_k$ to the range projector $P=AA^\dagger$.
\item \textbf{Full row rank ($m\le n$):} enforce $A X_k\to I_m$ via the \emph{left} deviation
\(
E_k=I_m-AX_k
\)
and use
\[
X_{k+1}=X_k-\gamma\,X_k\,(AX_k-I_m),\qquad 0<\gamma\le1,
\]
so $E_{k+1}=(1-\gamma)E_k+\gamma E_k^2$ and $X_k A$ tends to $Q=A^\dagger A$.
\end{itemize}

By selecting the initial scaling as \(X_0 = \alpha A^H\), where \(\alpha \in (0, 2/\|A\|_2^2)\), we ensure that \(\|F_0\|_2 < 1\) in the column-rank case and \(\|E_0\|_2 < 1\) in the row-rank case. Incorporating damping (\(\gamma < 1\)) expands the basin of attraction, promoting linear-then-quadratic convergence, whereas setting \(\gamma = 1\) yields quadratic convergence when the deviation is sufficiently small. These results depend solely on properties that hold directly in \(\mathbb{H}\): the submultiplicativity of the \(\|\cdot\|_2\) norm, spectral calculus for Hermitian matrices, and  \((AB)^H = B^H A^H\).

\paragraph{Higher–order (hyperpower) NS}
To accelerate convergence, we replace the linear correction by a truncated Neumann series of order $p\!\ge2$. 
Inspired by~\cite{Soleymani2015}, we adapt the hyperpower construction to quaternion matrices, addressing both left and right residuals:
\[
\text{(left)}\quad X_{k+1}=X_k\sum_{i=0}^{p-1}E_k^{\,i};
\qquad
\text{(right)}\quad X_{k+1}=\sum_{i=0}^{p-1}F_k^{\,i}X_k,
\]
resulting in exact residual recurrences $E_{k+1} = E_k^{\,p}$ and $F_{k+1} = F_k^{\,p}$, achieving local order-$p$ convergence. 
To reduce cost, in the next paragraph we evaluate these hyperpower polynomials via binary (for $p=2^q$) and Paterson--Stockmeyer factorizations,reducing the number of $s\times s$ quaternion matrix products per iteration (with $s=\min\{m,n\}$) from $p-2$ down to $\log_2 p - 1$ or $O(\sqrt{p})$, respectively.
Full statements and proofs are provided in Section~\ref{sec:theory-NSQ}.

\paragraph{Cost-saving factorizations and work counts}
Let $R\in\{E_k,F_k\}$ be the left/right deviation of size $s\times s$ with $s=\min(m,n)$. 
The hyperpower update involves applying the polynomial $S_{p-1}(R)=\sum_{i=0}^{p-1}R^i$ to $X_k$. 
A straightforward computation of this polynomial requires $ (p-2) $ matrix products of size $ s \times s $ to compute $ R^2, \dots, R^{p-1} $, plus one additional product to apply the result to $ X_k $.
We reduce the number of  $ s \times s $ matrix  products as follows:

\emph{(i) Power-of-two orders via binary factorization.}
If $p=2^q$, then
\[
S_{p-1}(R)=\prod_{j=0}^{q-1}\bigl(I+R^{2^j}\bigr).
\]
An efficient computational schedule involves first calculating \( R^2, R^4, \dots, R^{2^{q-1}} \) through successive squaring, requiring \( q-1 = \log_2 p - 1 \) square products. Then, the \( q \) factors are applied to \( X_k \) sequentially: initialize \( Y \gets X_k \); for \( j = 0, \dots, q-1 \), update \( Y \gets (I + R^{2^j})Y \); and finally, set \( X_{k+1} \gets Y \). This approach reduces the number of \( s \times s \) square products per iteration from \( p-2 \) to \( \log_2 p - 1 \) (e.g., for $p=8$: $6\to 2$; for $p=16$: $14\to 3$).


\emph{(ii) General $p$ via Paterson–Stockmeyer.}
Write $ p-1 = ab $, where $ a \approx b \approx \lceil \sqrt{p-1} \rceil $. First, precompute $ R, R^2, \dots, R^a $, requiring $ a-1 $ square operations. Then, evaluate $ S_{p-1}(R) = \sum_{i=0}^{p-1} R^i $ by grouping terms into blocks of degree $ a $, which requires $ O(a + b) $ additional multiplications. This approach reduces the  number of square and multiplication operations to $ O(\sqrt{p}) $.

\emph{(iii) Side choice.}
For matrices with full column rank, we employ the right update with square matrices of size \( n \times n \); for full row rank, we use the left update with square matrices of size \( m \times m \). This strategy minimizes the computational cost of the dominant \( s \times s \) matrix products, where \( s = \min(m, n) \).

In all cases, we reuse the precomputed product ($AX_k$ or $X_kA$) to build $E_k$ or $F_k$, and we
integrate additions into the “apply” multiplication steps to minimize memory traffic.
The factorizations are algebraic in a single matrix variable, ensuring their validity in $\mathbb{H}$ without modification.

\section{Theoretical results: Damped and Higher-order Newton--Schulz for quaternion pseudoinverses}\label{sec:theory-NSQ}

We consider $A\in\mathbb{H}^{m\times n}$ and seek its Moore--Penrose pseudoinverse $A^\dagger\in\mathbb{H}^{n\times m}$, i.e.\ the unique matrix satisfying \eqref{def:MP-inverse}.

Quaternion noncommutativity is respected throughout by maintaining left/right multiplication order. We use the operator $2$-norm $\|\cdot\|_2$ induced by the Euclidean vector norm and the quaternionic adjoint $X^H=\overline{X}^\top$.

\subsection{Damped Newton--Schulz for rectangular quaternion matrices }\label{sec:NS-rect-proofs}

We recall the iteration
\begin{equation}\label{eq:NS-damped-again}
X_{k+1}
=
\begin{cases}
X_k - \gamma\,(X_kA - I_n)\,X_k, & \text{if } m\ge n,\\[0.6ex]
X_k - \gamma\,X_k\,(AX_k - I_m), & \text{if } m<n,
\end{cases}
\qquad 0<\gamma\le 1,
\end{equation}
equivalently, $X_{k+1}=(1+\gamma)X_k-\gamma X_kAX_k$ in both cases, with
\begin{equation}\label{eq:X0-open}
X_0=\alpha A^H,\qquad \alpha\in\Bigl(0,\frac{2}{\|A\|_2^2}\Bigr).
\end{equation}
All products are quaternionic; $(\cdot)^H$ denotes quaternionic adjoint. We define
\[
F_k:=I_n-X_kA,\quad H_k:=X_kA,\qquad
E_k:=I_m-AX_k,\quad G_k:=AX_k.
\]

\begin{lem}[Initialization]\label{lem:init}
If $A$ has full column rank, then $F_0=I_n-\alpha A^HA$ satisfies $\|F_0\|_2<1$.
If $A$ has full row rank, then $E_0=I_m-\alpha AA^H$ satisfies $\|E_0\|_2<1$.
\end{lem}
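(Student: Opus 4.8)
The plan is to reduce each operator-norm inequality to an elementary scalar estimate on eigenvalues, using the Hermitian spectral calculus over $\mathbb{H}$ recalled in Section~\ref{sec:prelim}. Consider the full column rank case. Since $A$ has full column rank, the Gram matrix $A^HA\in\mathbb{H}^{n\times n}$ is Hermitian positive definite, hence $F_0=I_n-\alpha A^HA$ is Hermitian. By the quaternionic spectral theorem there is a unitary $V$ with $A^HA=V\Lambda V^H$, where $\Lambda=\mathrm{diag}(\sigma_1^2,\dots,\sigma_n^2)$ collects the (real, strictly positive) squared singular values of $A$; equivalently one reads this off the QSVD $A=U\Sigma V^H$. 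Then $F_0=V(I_n-\alpha\Lambda)V^H$, so $F_0$ is unitarily similar to the real diagonal matrix with entries $1-\alpha\sigma_i^2$.

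The second step is to invoke that, for a Hermitian quaternion matrix $M=WDW^H$ with $W$ unitary and $D$ real diagonal, unitary invariance of $\|\cdot\|_2$ gives $\|M\|_2=\|D\|_2=\max_i|D_{ii}|$. Applying this to $F_0$ yields $\|F_0\|_2=\max_{1\le i\le n}|1-\alpha\sigma_i^2|$. It then remains to check $|1-\alpha\sigma_i^2|<1$ for each $i$, which is the only place the hypotheses on $\alpha$ and on the rank enter. Writing $\sigma_{\max}=\|A\|_2$ we have $0<\sigma_i^2\le\sigma_{\max}^2$, the left inequality being strict by full column rank. Hence $1-\alpha\sigma_i^2<1$ (using $\alpha>0$, $\sigma_i^2>0$), and $1-\alpha\sigma_i^2\ge 1-\alpha\sigma_{\max}^2>1-2=-1$ (using $\alpha<2/\sigma_{\max}^2$). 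Therefore $|1-\alpha\sigma_i^2|<1$ for all $i$, so $\|F_0\|_2<1$.

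The full row rank case is identical with $A^HA$, $I_n$, $V$ replaced by $AA^H$, $I_m$, $U$, and $\sigma_i^2$ now the (strictly positive) eigenvalues of $AA^H$ — i.e. one simply swaps the roles of the two Gram matrices, since $AA^H\succ0$ in this case and $\|A\|_2$ is still the largest singular value. So the same chain of inequalities gives $\|E_0\|_2=\max_{1\le i\le m}|1-\alpha\sigma_i^2|<1$.

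There is essentially no serious obstacle: the content is entirely in the reduction to eigenvalues, which is legitimate in $\mathbb{H}$ precisely because Hermitian quaternion matrices admit a unitary diagonalization with real spectrum and $\|\cdot\|_2$ is unitarily invariant. The only point requiring a moment's care is tracking which inequalities are strict — the right-hand bound $<1$ needs $\sigma_i^2>0$ (full rank) and $\alpha>0$, while the left-hand bound $>-1$ needs the strict upper endpoint $\alpha<2/\|A\|_2^2$ together with $\sigma_i^2\le\|A\|_2^2$; note the endpoint $\alpha=2/\|A\|_2^2$ would only yield $\|F_0\|_2\le 1$, which is exactly why the open interval in \eqref{eq:X0-open} is imposed.
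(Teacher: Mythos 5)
Your proof is correct and takes essentially the same route as the paper: both diagonalize $A^HA$ (resp.\ $AA^H$) via the quaternionic spectral theorem, read off the eigenvalues $1-\alpha\sigma_i^2$ of $F_0$ (resp.\ $E_0$), and use $\alpha\in(0,2/\|A\|_2^2)$ together with $\sigma_i^2>0$ from full rank to place them in $(-1,1)$. You simply make explicit the unitary-invariance step and the strictness bookkeeping that the paper leaves implicit.
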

\begin{proof}
For full column rank, $A^HA\succ0$ with eigenvalues $\{\sigma_i(A)^2\}_{i=1}^n$, hence $\lambda(F_0)=\{1-\alpha\sigma_i(A)^2\}\subset(-1,1)$ for $\alpha<2/\|A\|_2^2$. The row–rank case is analogous with $AA^H$.
\end{proof}

\begin{lem}[Exact residual recurrences]\label{lem:residual-recurrences}
For the damped NS update \eqref{eq:NS-damped-again},
\begin{align}
F_{k+1} &= (1-\gamma)F_k + \gamma F_k^2, \label{eq:Frec-proof}\\
H_{k+1} &= (1+\gamma)H_k - \gamma H_k^2, \label{eq:Hrec-proof}\\
E_{k+1} &= (1-\gamma)E_k + \gamma E_k^2, \label{eq:Erec-proof}\\
G_{k+1} &= (1+\gamma)G_k - \gamma G_k^2. \label{eq:Grec-proof}
\end{align}
\end{lem}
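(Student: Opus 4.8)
The plan is a direct algebraic verification that leans on the unified form of the update. First I would record that, whatever the shape of $A$, the damped step \eqref{eq:NS-damped-again} can be rewritten as the single identity $X_{k+1} = (1+\gamma)X_k - \gamma\,X_k A X_k$: in the $m\ge n$ branch one expands $(X_kA - I_n)X_k = X_kAX_k - X_k$, and in the $m<n$ branch $X_k(AX_k - I_m) = X_kAX_k - X_k$, so both reduce to the same formula. Everything else follows from this one identity by left- or right-multiplying by $A$ while scrupulously keeping the quaternion multiplication order.

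Next I would establish the two ``product'' recurrences \eqref{eq:Hrec-proof} and \eqref{eq:Grec-proof}, which are the cleanest. Right-multiplying the unified update by $A$ gives
$H_{k+1} = X_{k+1}A = (1+\gamma)X_kA - \gamma\,X_kAX_kA = (1+\gamma)H_k - \gamma\,(X_kA)(X_kA) = (1+\gamma)H_k - \gamma H_k^2$,
the point being that $X_kAX_kA$ is literally $(X_kA)^2$ read left to right, so no commutation is needed. Symmetrically, left-multiplying by $A$ gives $G_{k+1} = AX_{k+1} = (1+\gamma)AX_k - \gamma\,AX_kAX_k = (1+\gamma)G_k - \gamma G_k^2$.

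Then I would derive the ``deviation'' recurrences \eqref{eq:Frec-proof} and \eqref{eq:Erec-proof} by substitution. Using $F_k = I_n - H_k$, hence $H_k = I_n - F_k$ and $H_k^2 = I_n - 2F_k + F_k^2$ (legitimate because $I_n$ commutes with everything), I would compute $F_{k+1} = I_n - H_{k+1} = I_n - (1+\gamma)H_k + \gamma H_k^2 = I_n - (1+\gamma)(I_n - F_k) + \gamma(I_n - 2F_k + F_k^2)$ and collect terms: the $I_n$ coefficient is $1 - (1+\gamma) + \gamma = 0$, the $F_k$ coefficient is $(1+\gamma) - 2\gamma = 1-\gamma$, and the $F_k^2$ term carries $\gamma$, which is exactly \eqref{eq:Frec-proof}. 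The identical bookkeeping with $E_k = I_m - G_k$ yields \eqref{eq:Erec-proof}.

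There is essentially no analytic obstacle; the only thing to watch is noncommutativity, and the argument is arranged so that every product that appears is a power of a single square matrix ($H_k$, $G_k$, $F_k$, or $E_k$), where powers are unambiguous, or a product with $I$, so reordering is never invoked. I would also remark that all four recurrences hold formally in both shape regimes, even though in each case only one of the pairs $(F_k,H_k)$ or $(E_k,G_k)$ is the ``active'' one whose norm is controlled by Lemma~\ref{lem:init}.
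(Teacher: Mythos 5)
Your proposal is correct and takes essentially the same route as the paper: write the update in the unified form $X_{k+1}=(1+\gamma)X_k-\gamma X_kAX_k$, multiply by $A$ on the right (resp. left) to get the $H_k$ (resp. $G_k$) recurrence, then substitute $H_k=I_n-F_k$ and $G_k=I_m-E_k$ and expand. You simply carry out the coefficient bookkeeping more explicitly than the paper does.
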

\begin{proof}
From $X_{k+1}=(1+\gamma)X_k-\gamma X_kAX_k$, we get
\[
X_{k+1}A=(1+\gamma)X_kA-\gamma (X_kA)^2=(1+\gamma)H_k-\gamma H_k^2
\]
which is \eqref{eq:Hrec-proof}. Then $F_{k+1}=I_n-X_{k+1}A=I_n-[(1+\gamma)H_k-\gamma H_k^2]$; substituting $H_k=I_n-F_k$ and expanding gives \eqref{eq:Frec-proof}.
Similarly, $AX_{k+1}=(1+\gamma)AX_k-\gamma (AX_k)^2=(1+\gamma)G_k-\gamma G_k^2$ yields \eqref{eq:Grec-proof}, and $E_{k+1}=I_m-AX_{k+1}$ leads to \eqref{eq:Erec-proof} by $G_k=I_m-E_k$.
\end{proof}

We will use the scalar maps $g_\gamma(t)=(1-\gamma)t+\gamma t^2$ and $h_\gamma(t)=(1+\gamma)t-\gamma t^2$. Note that $g_\gamma$ is a strict contraction on $[0,1)$, while $h_\gamma$ preserves $[0,2)$ and satisfies $h_\gamma(\lambda)-\lambda=\gamma\lambda(1-\lambda)$: it strictly increases eigenvalues in $(0,1)$ and strictly decreases eigenvalues in $(1,2)$, with fixed points $\{0,1\}$.

\begin{thm}[Full column rank: convergence to $A^\dagger$]\label{thm:col-full-proof}
Let $A\in\mathbb{H}^{m\times n}$ have full column rank ($m\ge n$) and $X_0$ as in \eqref{eq:X0-open}. Then:
\begin{enumerate}
\item $F_{k+1}=g_\gamma(F_k)$; hence $\|F_{k+1}\|_2\le g_\gamma(\|F_k\|_2)$ and $\|F_k\|_2\downarrow 0$, so $X_kA\to I_n$.
\item $G_{k+1}=h_\gamma(G_k)$; since $G_0=\alpha AA^H$ is Hermitian p.s.d.\ with spectrum in $[0,2)$, each $G_k$ is Hermitian with spectrum in $[0,2)$ and $G_k\to P:=AA^\dagger$ (the orthogonal projector onto $\mathcal{R}(A)$): eigenvalues in $(0,1)$ increase monotonically to $1$, while those in $(1,2)$ decrease monotonically to $1$; zeros stay zero.
\item The sequence $(X_k)$ converges in operator norm to $A^\dagger$.
\end{enumerate}
\end{thm}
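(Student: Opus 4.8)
Parts (1) and (2) give us the two residual pictures; the plan is to combine them. We already know $F_k = I_n - X_k A \to 0$ and $G_k = A X_k \to P$ in operator norm. The goal is to show $X_k \to A^\dagger$. I would start from the candidate identity $A^\dagger = A^\dagger P = A^\dagger A X_\infty$... but more directly: since $A$ has full column rank, $A^\dagger = (A^H A)^{-1} A^H$ and $A^\dagger A = I_n$, i.e. $Q = I_n$. The cleanest route is to write the error $X_k - A^\dagger$ and split it using the projector $P = A A^\dagger$ and its complement $I_m - P$.

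The key algebraic step: from $F_k = I_n - X_k A$, left-multiply by $A^\dagger$ on the right appropriately — actually, observe $X_k A A^\dagger = (I_n - F_k) A^\dagger$... let me instead use the decomposition $X_k = X_k P + X_k (I_m - P) = X_k A A^\dagger + X_k (I_m - P)$. For the first term, $X_k A A^\dagger = (I_n - F_k) A^\dagger = A^\dagger - F_k A^\dagger$. For the second term, I claim $X_k(I_m - P) \to 0$: indeed, one checks by induction on the iteration \eqref{eq:NS-damped-again} that $X_k (I_m - P) = X_0 (I_m - P)$ stays constant (since $X_{k+1} = (1+\gamma)X_k - \gamma X_k A X_k$ and $(I_m-P)$ is annihilated from the right by anything ending in $A$... wait — need $X_k A (I_m-P)$; but $A(I_m - P)$? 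No: $(I_m-P)$ acts on the left of $A$). Let me reconsider and instead track the row-space side: $X_k = Q X_k + (I_n - Q) X_k = X_k + 0$ trivially since $Q = I_n$ here; that is not useful. The honest approach: show $X_k - A^\dagger = (X_k A - I_n)A^\dagger + X_k(I_m - AA^\dagger) = -F_k A^\dagger + X_k(I_m - P)$, and then prove $X_k(I_m - P) \to 0$ by showing it satisfies $Z_{k+1} = (1+\gamma)Z_k - \gamma Z_k (A X_k)$ with $Z_k := X_k(I_m-P)$; but $A X_k (I_m - P)$ need not be $Z_k$-related unless $A X_k$ commutes with $I_m-P$ in the limit, which it does since $G_k \to P$ and $P(I_m-P)=0$.

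So the structured plan is: (a) write $X_k - A^\dagger = -F_k A^\dagger + X_k(I_m-P)$; (b) bound $\|F_k A^\dagger\|_2 \le \|F_k\|_2 \|A^\dagger\|_2 \to 0$ using part (1); (c) set $Z_k = X_k(I_m - P)$, derive $Z_{k+1} = (1+\gamma)Z_k - \gamma X_k A X_k (I_m-P) = (1+\gamma) Z_k - \gamma X_k G_k (I_m - P)$, then use $G_k(I_m-P) = (G_k - P)(I_m-P) \to 0$ together with a uniform bound $\sup_k \|X_k\|_2 < \infty$ (which follows since $\|X_k A\|_2 = \|H_k\|_2 = \|I_n - F_k\|_2 \le 1 + \|F_k\|_2 \le 2$ and $X_k = X_k A A^\dagger + Z_k$ with $\|Z_k\|$ to be controlled — a mild circularity to untangle, e.g. by first proving $\|Z_k\|$ is bounded directly from the recurrence and $\|Z_0\|<\infty$); (d) conclude $\|Z_{k+1}\|_2 \le (1+\gamma)\|Z_k\|_2 + \gamma C \|(G_k-P)(I_m-P)\|_2$ — this is a linear recursion with a forcing term going to zero, but the coefficient $1+\gamma > 1$ is a problem, so I must be sharper: use that $X_k G_k (I_m - P)$ actually equals $Z_k$ plus something small, giving $Z_{k+1} = Z_k + (\text{small})$... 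Actually the correct identity is $X_k A X_k (I_m - P)$; writing $A X_k = G_k$ and noting $X_k(I_m - P)$ should satisfy $Z_{k+1} = Z_k$ exactly when $G_k = P$, I expect $Z_{k+1} - Z_k = \gamma Z_k - \gamma X_k G_k(I_m-P) = -\gamma X_k(G_k - P)(I_m-P)$, hence $\|Z_{k+1} - Z_k\|_2 \le \gamma \|X_k\|_2 \|G_k - P\|_2$; combined with geometric decay of $\|G_k - P\|_2$ (from part (2), eigenvalue dynamics give linear-then-quadratic convergence, hence summability) and a bound on $\|X_k\|_2$, the sequence $Z_k$ is Cauchy, so $Z_k \to Z_\infty$; finally $Z_\infty = Z_\infty$ with $A Z_\infty = \lim G_k(I_m-P) = P(I_m-P) = 0$, and since $\mathcal{R}(Z_\infty) \perp$... forcing $Z_\infty = 0$ requires that $Z_\infty$'s columns lie in $\ker A = \{0\}$ (full column rank!), giving $Z_\infty = 0$.

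\textbf{Main obstacle.} The delicate point is establishing a uniform bound $\sup_k \|X_k\|_2 < \infty$ and the summability of $\|G_k - P\|_2$ without circular reasoning; the natural fix is to prove these jointly, first getting $\|X_k A\|_2 \le 2$ for free from part (1), then controlling $\|X_k(I_m - AX_k)\|_2$ and bootstrapping, or alternatively to bypass the operator-norm bookkeeping entirely by passing to the QSVD $A = U\Sigma V^H$ (valid in $\mathbb{H}$ by the Preliminaries), under which the iteration decouples into scalar iterations $x_{k+1}^{(i)} = (1+\gamma)x_k^{(i)} - \gamma \sigma_i^2 (x_k^{(i)})^2$ on the singular directions and a frozen/decaying component on the orthogonal complement — the scalar analysis is elementary and makes convergence $x_k^{(i)} \to 1/\sigma_i$ transparent, whence $X_k = V \operatorname{diag}(x_k^{(i)}) U^H \to V\Sigma^\dagger U^H = A^\dagger$. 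I would present the QSVD-based argument as the main proof and relegate the norm-only estimates to a remark, since the QSVD reduction is the cleanest way to handle the $(1+\gamma)>1$ coefficient rigorously.
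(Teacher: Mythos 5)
Your final QSVD-based plan is a valid and genuinely different route from the paper's, which instead observes that $X_{k+1}-X_k=\gamma F_kX_k$, deduces $\sum_k\|F_k\|_2<\infty$ from the scalar contraction $g_\gamma$, concludes that $(X_k)$ is bounded and Cauchy, and then reads off the Penrose identities from the limits $F_k\to 0$ and $G_k\to P$. Your QSVD argument buys transparency: with $A=U\Sigma V^H$ and $X_k=V D_k U^H$, the iteration decouples into scalar maps $d_{k+1}^{(i)}=(1+\gamma)d_k^{(i)}-\gamma\sigma_i\,(d_k^{(i)})^2$ with fixed point $1/\sigma_i$ (note: you wrote $\sigma_i^2$, which would give fixed point $1/\sigma_i^2$, inconsistent with your own conclusion $X_k\to V\Sigma^\dagger U^H$; the correct coefficient is $\sigma_i$), and the $(1+\gamma)>1$ issue you were worried about disappears. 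The paper's route is shorter because it never needs to control $X_k(I_m-P)$ at all. Incidentally, the decomposition you abandoned midway actually closes in one line: with $Z_k:=X_k(I_m-P)$, one has $Z_{k+1}=(1+\gamma)X_k(I_m-P)-\gamma X_kA\,[X_k(I_m-P)]=(1+\gamma)Z_k-\gamma X_kA Z_k$, so $Z_k=0$ implies $Z_{k+1}=0$; and $Z_0=\alpha A^H(I_m-AA^\dagger)=\alpha(A^H-A^H)=0$. Hence $Z_k\equiv 0$, and $X_k-A^\dagger=-F_kA^\dagger\to 0$ directly. Your confusion arose from tracking the wrong factor (you looked for $A(I_m-P)$ when the induction only needs $X_k(I_m-P)$ on the right). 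With that repair, the decomposition argument is actually the shortest proof of all, shorter than both the paper's summability argument and your QSVD reduction.
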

\begin{proof}
(1) From \eqref{eq:Frec-proof} and Lemma~\ref{lem:init}, $\|F_0\|_2<1$ and submultiplicativity yield $\|F_k\|_2\downarrow 0$.
(2) Each $G_k$ is a real polynomial in $G_0$ and hence Hermitian; the scalar dynamics under $h_\gamma$ on $[0,2)$ gives the stated monotone drift to $\{0,1\}$, hence $G_k\to P$.
(3) From $X_{k+1}-X_k=\gamma F_k X_k$, summability of $(\|F_k\|_2)$ gives boundedness and Cauchy convergence of $(X_k)$. Limits $F_k\to0$ and $G_k\to P$ imply $X_\star A=I_n$ and $AX_\star=P$ with $P^H=P$; thus $X_\star$ satisfies the Penrose equations and equals $A^\dagger$.
\end{proof}

\begin{thm}[Full row rank: convergence to $A^\dagger$]\label{thm:row-full-proof}
Let $A\in\mathbb{H}^{m\times n}$ have full row rank ($m\le n$) and $X_0$ as in \eqref{eq:X0-open}. Then:
\begin{enumerate}
\item $E_{k+1}=g_\gamma(E_k)$; hence $\|E_k\|_2\downarrow 0$ and $AX_k\to I_m$.
\item $H_{k+1}=h_\gamma(H_k)$; each $H_k$ is Hermitian with spectrum in $[0,2)$ and $H_k\to Q:=A^\dagger A$ (the orthogonal projector onto $\mathcal{R}(A^H)$). Consequently, $F_k=I_n-H_k\to I_n-Q$.
\item The sequence $(X_k)$ converges in operator norm to $A^\dagger$.
\end{enumerate}
\end{thm}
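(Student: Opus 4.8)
The plan is to transcribe the proof of Theorem~\ref{thm:col-full-proof} with the roles of the left and right deviations interchanged. For part~(1), I would start from Lemma~\ref{lem:residual-recurrences}: equation~\eqref{eq:Erec-proof} gives $E_{k+1}=(1-\gamma)E_k+\gamma E_k^2=g_\gamma(E_k)$, while Lemma~\ref{lem:init} (now in the full row rank branch, where $AA^H\succ0$ has eigenvalues $\sigma_i(A)^2$ and $\alpha<2/\|A\|_2^2$) gives $\|E_0\|_2<1$. Triangle inequality and submultiplicativity of $\|\cdot\|_2$ then yield $\|E_{k+1}\|_2\le (1-\gamma)\|E_k\|_2+\gamma\|E_k\|_2^2=g_\gamma(\|E_k\|_2)$; since $g_\gamma$ is increasing and a strict contraction on $[0,1)$ fixing $0$, the scalars $\|E_k\|_2$ decrease monotonically, stay below $1$, and converge to $0$. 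Hence $AX_k=G_k=I_m-E_k\to I_m$, which is part~(1).

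For part~(2) I would use~\eqref{eq:Hrec-proof}: $H_{k+1}=(1+\gamma)H_k-\gamma H_k^2=h_\gamma(H_k)$ with $H_0=X_0A=\alpha A^HA$. This $H_0$ is Hermitian positive semidefinite with spectrum $\{\alpha\sigma_i(A)^2\}_{i=1}^m\cup\{0\}\subset[0,2)$, the $n-m$ extra zeros arising because $\operatorname{rank}(A^HA)=m$. Since every $H_k$ is a real polynomial in $H_0$, each $H_k$ is Hermitian with the same eigenvectors as $H_0$, and I would invoke the scalar analysis of $h_\gamma$ on $[0,2)$ recalled just before Theorem~\ref{thm:col-full-proof}: $h_\gamma$ maps $[0,2)$ into itself, fixes $0$ and $1$, drives eigenvalues in $(0,1)$ up to $1$ and those in $(1,2)$ down to $1$. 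Therefore $H_k$ converges to the orthogonal projector onto the span of the eigenvectors of $H_0$ with nonzero eigenvalue, namely onto $\mathcal{R}(A^HA)=\mathcal{R}(A^H)$, which by the Preliminaries is exactly $Q=A^\dagger A$; consequently $F_k=I_n-H_k\to I_n-Q$.

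For part~(3), I would rewrite the update as $X_{k+1}=X_k+\gamma X_k E_k$, so $X_{k+1}-X_k=\gamma X_k E_k$ and $\|X_{k+1}\|_2\le(1+\gamma\|E_k\|_2)\|X_k\|_2$. From part~(1) the sequence $\|E_k\|_2$ decreases to $0$ geometrically (the factor $1-\gamma+\gamma\|E_k\|_2$ is eventually uniformly $<1$, and quadratically when $\gamma=1$), hence $\sum_k\|E_k\|_2<\infty$; then $\prod_k(1+\gamma\|E_k\|_2)<\infty$ gives a uniform bound $\|X_k\|_2\le M$, and $\|X_{k+1}-X_k\|_2\le\gamma M\|E_k\|_2$ is summable, so $(X_k)$ is Cauchy and converges in operator norm to some $X_\star$. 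Passing to the limit in parts~(1)--(2) gives $AX_\star=I_m$ and $X_\star A=Q$ with $Q^H=Q$. I would then check the four Penrose equations: $AX_\star A=(AX_\star)A=A$; $(AX_\star)^H=I_m=AX_\star$; $(X_\star A)^H=Q^H=Q=X_\star A$; and $X_\star A X_\star=(X_\star A)X_\star$, which equals $X_\star$ since $X_\star=X_\star(AX_\star)=(X_\star A)X_\star$ by associativity. Uniqueness of the Moore--Penrose pseudoinverse then forces $X_\star=A^\dagger$.

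Because this is essentially a mirror of the column-rank argument, I do not expect a genuine obstacle; the two places that need a little extra care are (a) that here $H_0=\alpha A^HA$ is genuinely singular, so the ``$0$ stays $0$'' branch of the $h_\gamma$ dynamics is actually used and the limit $Q$ is a nontrivial projector rather than an identity, and (b) the summability of $(\|E_k\|_2)$ underlying the Cauchy step, which should be stated explicitly from the strict-contraction estimate rather than merely asserted.
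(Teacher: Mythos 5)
Your proposal is correct and follows the paper's argument step for step: part~(1) via the residual recurrence \eqref{eq:Erec-proof} and Lemma~\ref{lem:init} with the scalar contraction $g_\gamma$, part~(2) via \eqref{eq:Hrec-proof} and the $h_\gamma$ spectral dynamics, and part~(3) via $X_{k+1}-X_k=\gamma X_kE_k$, summability of $\|E_k\|_2$, and verification of the Penrose identities. You fill in more detail than the paper (the explicit geometric rate bound, the $n-m$ zero eigenvalues of $H_0$, the Penrose check term by term), but the route is the same.
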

\begin{proof}
(1) From equation \eqref{eq:Erec-proof} and Lemma~\ref{lem:init}, $\|E_0\|_2<1$ and $\|E_k\|_2\downarrow 0$.
(2) Equation \eqref{eq:Hrec-proof} keeps $H_k$ Hermitian; the scalar map $h_\gamma$ on $[0,2)$ drives eigenvalues to $\{0,1\}$ with limit $Q$. Hence $F_k\to I_n-Q$.
(3) From $X_{k+1}-X_k=\gamma X_k E_k$, summability of $(\|E_k\|_2)$ gives boundedness and Cauchy convergence; limits $AX_k\to I_m$ and $H_k\to Q$ yield the Penrose identities, so $X_\star=A^\dagger$.
\end{proof}

\begin{rem}{(Quaternion foundations used)}
We used only properties that hold in $\mathbb{H}$ as in $\mathbb{C}$: (i) submultiplicativity of $\|\cdot\|_2$, (ii) spectral calculus for quaternionic Hermitian matrices (real spectrum; polynomial spectral mapping), (iii) $(XY)^H=Y^H X^H$ and continuity of $\,^H$. These justify all steps without modification in the quaternion setting.
\end{rem}

\subsection{Higher--order (order-$p$) Newton--Schulz / hyperpower iterations in $\mathbb{H}$ }\label{sec:hpNSQ-proofs}

Let $A\in\mathbb{H}^{m\times n}$ and define $E_k:=I_m-AX_k$ and $F_k:=I_n-X_kA$. The order-$p$ updates are:

\begin{align}
\text{(left)}\qquad& X_{k+1}=X_k\sum_{i=0}^{p-1}E_k^{\,i}, \label{eq:left-hp}\\
\text{(right)}\qquad& X_{k+1}=\sum_{i=0}^{p-1}F_k^{\,i}\,X_k, \label{eq:right-hp}
\end{align}
which reduce to undamped NS when $p=2$.

\begin{lem}[Exact residual recurrences]\label{lem:hp-resid}
For \eqref{eq:left-hp} and \eqref{eq:right-hp} one has
\[
E_{k+1}=E_k^{\,p}\qquad\text{and}\qquad F_{k+1}=F_k^{\,p}.
\]
\end{lem}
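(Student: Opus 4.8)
The plan is to prove the two identities directly by algebraic manipulation of a telescoping/geometric-series identity, treating the left and right cases separately but in parallel. The key observation is the elementary factorization, valid for any matrix $M$ over $\mathbb{H}$ (since it involves only a single matrix variable and hence no noncommutativity obstruction), that
\[
(I-M)\sum_{i=0}^{p-1}M^{\,i}=I-M^{\,p}=\Bigl(\sum_{i=0}^{p-1}M^{\,i}\Bigr)(I-M).
\]
This is the noncommutative analogue of the finite geometric series, and it holds because all the terms are powers of the same matrix $M$, which commute with one another.

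For the left update \eqref{eq:left-hp}, I would start from $E_{k+1}=I_m-AX_{k+1}$, substitute $X_{k+1}=X_k\sum_{i=0}^{p-1}E_k^{\,i}$, and compute
\[
AX_{k+1}=(AX_k)\sum_{i=0}^{p-1}E_k^{\,i}=(I_m-E_k)\sum_{i=0}^{p-1}E_k^{\,i}=I_m-E_k^{\,p},
\]
using $AX_k=I_m-E_k$ and the geometric-series identity above with $M=E_k$. Hence $E_{k+1}=I_m-AX_{k+1}=E_k^{\,p}$. Symmetrically, for the right update \eqref{eq:right-hp}, I would write $F_{k+1}=I_n-X_{k+1}A$, substitute $X_{k+1}=\sum_{i=0}^{p-1}F_k^{\,i}X_k$, and obtain
\[
X_{k+1}A=\sum_{i=0}^{p-1}F_k^{\,i}(X_kA)=\Bigl(\sum_{i=0}^{p-1}F_k^{\,i}\Bigr)(I_n-F_k)=I_n-F_k^{\,p},
\]
using $X_kA=I_n-F_k$ and the right-multiplied form of the identity; therefore $F_{k+1}=F_k^{\,p}$.

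There is essentially no hard part here: the only thing to be careful about is the placement of factors, i.e.\ to use the left factorization $(I-M)\sum M^{\,i}=I-M^{\,p}$ in the left-update case and the right factorization $\sum M^{\,i}\,(I-M)=I-M^{\,p}$ in the right-update case, so that the substitution $AX_k=I_m-E_k$ (resp.\ $X_kA=I_n-F_k$) drops in at the correct side. One could alternatively prove the geometric-series identity itself by a one-line induction on $p$ or by telescoping $\sum_{i=0}^{p-1}(M^{\,i}-M^{\,i+1})$, but since it is a single-variable polynomial identity it is literally the same as over a commutative ring and needs no special justification in $\mathbb{H}$.
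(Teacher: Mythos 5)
Your proof is correct and follows essentially the same route as the paper's: substitute the update into $AX_{k+1}$ (resp.\ $X_{k+1}A$), use $AX_k=I_m-E_k$ (resp.\ $X_kA=I_n-F_k$), and telescope the finite geometric series. Making the left/right placement of the geometric-series identity explicit is a sensible bit of added care, but the argument is the same.
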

\begin{proof}
$AX_{k+1}=AX_k\sum\limits_{i=0}^{p-1}E_k^{\,i}=(I_m-E_k)\sum\limits_{i=0}^{p-1}E_k^{\,i}=I_m-E_k^{\,p}$ proves the left case; the right case is analogous using $X_{k+1}A=\sum\limits_{i=0}^{p-1}F_k^{\,i}(I_n-F_k)=I_n-F_k^{\,p}$.
\end{proof}

\begin{thm}[Full column rank, order-$p$ convergence]\label{thm:hp-col-proof}
Let $A$ have full column rank and $X_0=\alpha A^H$ with $\alpha\in(0,2/\|A\|_2^2)$. Use the \emph{right} update \eqref{eq:right-hp}. Then $\|F_k\|_2\le \|F_0\|_2^{\,p^{\,k}}\to 0$, hence $X_kA\to I_n$; moreover, $AX_k\to P=AA^\dagger$ (as in Theorem~\ref{thm:col-full-proof}(ii)). The sequence $(X_k)$ converges in operator norm to $A^\dagger$.
\end{thm}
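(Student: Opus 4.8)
The plan is to mirror the structure of Theorem~\ref{thm:col-full-proof}, replacing the damped scalar map $g_\gamma$ by the simple $p$-th power, and reusing everything already established for the left/right deviations. First I would record the residual recurrence for the right update: by Lemma~\ref{lem:hp-resid}, $F_{k+1}=F_k^{\,p}$, so submultiplicativity of $\|\cdot\|_2$ immediately gives $\|F_{k+1}\|_2\le\|F_k\|_2^{\,p}$, and inductively $\|F_k\|_2\le\|F_0\|_2^{\,p^k}$. Since $A$ has full column rank, Lemma~\ref{lem:init} gives $\|F_0\|_2<1$, whence $\|F_k\|_2\to0$ (doubly exponentially), i.e. $X_kA\to I_n$ in operator norm. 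This also shows $(\|F_k\|_2)$ is summable, which I will need for convergence of $(X_k)$ itself.

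Next I would track the range-side quantity $G_k:=AX_k$. From the right update $X_{k+1}=\sum_{i=0}^{p-1}F_k^{\,i}X_k$, left-multiplying by $A$ and using $AF_k^{\,i}=(I_m-G_k)\,A\cdots$ — more cleanly, observe $X_{k+1}A=\sum_{i=0}^{p-1}F_k^{\,i}X_kA=I_n-F_k^{\,p}$ as in the lemma, and then $G_{k+1}=AX_{k+1}$ can be written as a polynomial in $G_0=\alpha AA^H$: indeed, since $X_0=\alpha A^H$, every $X_k$ is a (real) polynomial in $A^HA$ times $A^H$, so $G_k=AX_k$ is a real polynomial in $G_0$ and hence Hermitian with real spectrum. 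The cleanest route is to note that on the spectral side, if $\lambda=\sigma_i(A)^2\alpha\in(0,2)$ is an eigenvalue of $G_0$, the corresponding deviation eigenvalue is $1-\lambda\in(-1,1)$, and one application of the right-update map sends $1-\mu\mapsto(1-\mu)^p$, i.e. the $G$-eigenvalue $\mu\mapsto 1-(1-\mu)^p$; this map fixes $0$ and $1$ and, because $|1-\mu|<1$ for $\mu\in(0,2)$, drives every such eigenvalue monotonically (in the appropriate sense; strictly for $p\ge2$) to $1$. Hence $G_k\to P=AA^\dagger$, the orthogonal projector onto $\mathcal R(A)$, exactly as in Theorem~\ref{thm:col-full-proof}(ii); the zero eigenvalues (on $\mathcal R(A)^\perp$) stay zero, so the limit is precisely $P$, which is Hermitian.

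Finally I would conclude convergence of $(X_k)$ and identify the limit. From the right update, $X_{k+1}-X_k=\sum_{i=1}^{p-1}F_k^{\,i}X_k$, so $\|X_{k+1}-X_k\|_2\le\big(\sum_{i=1}^{p-1}\|F_k\|_2^{\,i}\big)\|X_k\|_2\le\frac{\|F_k\|_2}{1-\|F_k\|_2}\|X_k\|_2$; combined with summability of $(\|F_k\|_2)$ and a standard bootstrap showing $(\|X_k\|_2)$ is bounded, this makes $(X_k)$ Cauchy, hence convergent to some $X_\star$. Passing to the limit in $F_k\to0$ and $G_k\to P$ gives $X_\star A=I_n$ and $AX_\star=P=P^H$; together with $X_\star=X_\star(AX_\star)=X_\star P$ these are the four Penrose equations, so $X_\star=A^\dagger$ by uniqueness. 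The only mild obstacle is the boundedness of $(\|X_k\|_2)$: since $F_k\to0$ only guarantees eventual contraction, I would fix $K$ with $\|F_k\|_2\le\tfrac12$ for $k\ge K$, bound $\|X_{k+1}\|_2\le(1+2\|F_k\|_2)\|X_k\|_2$ for such $k$, and use $\prod_{k\ge K}(1+2\|F_k\|_2)<\infty$ (again from summability) to get a uniform bound — a routine argument, and the same one implicitly used in Theorem~\ref{thm:col-full-proof}(3). Everything else is the spectral-mapping bookkeeping, which is legitimate over $\mathbb H$ for Hermitian matrices exactly as invoked in the earlier proofs.
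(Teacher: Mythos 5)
Your proof is correct and follows essentially the same route as the paper: the residual recurrence $F_{k+1}=F_k^{\,p}$ from Lemma~\ref{lem:hp-resid} plus submultiplicativity gives the super-geometric decay, the increment bound $\|X_{k+1}-X_k\|_2\le\frac{\|F_k\|_2}{1-\|F_k\|_2}\|X_k\|_2$ with the associated product argument yields boundedness and Cauchy convergence, and the Penrose equations are verified at the limit. The extra spectral-mapping detail you give for $G_k=AX_k\to P$ (via $\mu\mapsto 1-(1-\mu)^p$ on $[0,2)$) simply spells out what the paper compresses into a reference to Theorem~\ref{thm:col-full-proof}(ii).
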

\begin{proof}
By Lemma~\ref{lem:hp-resid}, $F_{k+1}=F_k^{\,p}$, so $\|F_k\|_2\le \|F_0\|_2^{\,p^{\,k}}$. The increment satisfies
\[
X_{k+1}-X_k=\Bigl(\sum_{i=1}^{p-1}F_k^{\,i}\Bigr)X_k 
\quad\Rightarrow\quad
\|X_{k+1}-X_k\|_2 \le \frac{\|F_k\|_2}{1-\|F_k\|_2}\,\|X_k\|_2, 
\]
which is summable because $\sum_k \|F_k\|_2<\infty$ (super-geometric decay). Boundedness of $\|X_k\|_2$ follows from
\[
\|X_{k+1}\|_2 \le \Bigl(1+\frac{\|F_k\|_2}{1-\|F_k\|_2}\Bigr)\|X_k\|_2,
\]
and the product of factors converges. Thus $X_k\to X_\star$. Taking the limit results in $X_\star A = I_n$ $X_\star A=I_n$ and $AX_\star=P$ with $P^H=P$, i.e., the Penrose equations; uniqueness confirms that $X_\star=A^\dagger$.
\end{proof}

\begin{thm}[Full row rank, order-$p$ convergence]\label{thm:hp-row-proof}
Let $A$ have full row rank and $X_0=\alpha A^H$ with $\alpha\in(0,2/\|A\|_2^2)$. Use the \emph{left} update \eqref{eq:left-hp}. Then $\|E_k\|_2\le \|E_0\|_2^{\,p^{\,k}}\to 0$, hence $AX_k\to I_m$; moreover, $X_kA\to Q=A^\dagger A$. The sequence $(X_k)$ converges in operator norm to $A^\dagger$.
\end{thm}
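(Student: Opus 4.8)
The plan is to run the exact mirror of the proof of Theorem~\ref{thm:hp-col-proof}, interchanging the roles of left/right, of $E$ and $F$, and of $m$ and $n$. I would start from Lemma~\ref{lem:hp-resid} applied to the left update \eqref{eq:left-hp}: it gives the exact recurrence $E_{k+1}=E_k^{\,p}$, so submultiplicativity of $\|\cdot\|_2$ yields $\|E_{k+1}\|_2\le\|E_k\|_2^{\,p}$ and hence $\|E_k\|_2\le\|E_0\|_2^{\,p^{\,k}}$. By Lemma~\ref{lem:init} in the full row rank case, $\|E_0\|_2<1$, so the bound decays super-geometrically and $AX_k\to I_m$ in operator norm; this establishes the first assertion.

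Next I would prove that the iterates themselves converge. Writing $X_{k+1}-X_k=X_k\sum_{i=1}^{p-1}E_k^{\,i}$ from \eqref{eq:left-hp} and bounding the geometric sum (legitimate since $\|E_k\|_2<1$), I get $\|X_{k+1}-X_k\|_2\le \tfrac{\|E_k\|_2}{1-\|E_k\|_2}\,\|X_k\|_2$ and $\|X_{k+1}\|_2\le\bigl(1+\tfrac{\|E_k\|_2}{1-\|E_k\|_2}\bigr)\|X_k\|_2$. Because $\sum_k\|E_k\|_2<\infty$, the product of the amplification factors $\bigl(1+\tfrac{\|E_k\|_2}{1-\|E_k\|_2}\bigr)$ converges, so $(\|X_k\|_2)$ is bounded; then $\sum_k\|X_{k+1}-X_k\|_2<\infty$, so $(X_k)$ is Cauchy and converges to some $X_\star\in\mathbb{H}^{n\times m}$. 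Passing to the limit in $AX_k\to I_m$ gives $AX_\star=I_m$, which already forces three of the four Penrose identities in \eqref{def:MP-inverse}: $AX_\star A=A$; $X_\star AX_\star=X_\star(AX_\star)=X_\star$; and $(AX_\star)^H=I_m=AX_\star$.

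The only genuinely new point — and where the row-rank case departs from Theorem~\ref{thm:hp-col-proof} — is the behaviour of $F_k=I_n-X_kA$. Here $\ker A$ is nontrivial, so $F_0=I_n-\alpha A^HA$ carries the eigenvalue $1$ and $\|F_0\|_2=1$; the right residual does not vanish. Instead I would iterate the second identity of Lemma~\ref{lem:hp-resid} to $F_k=F_0^{\,p^{\,k}}$ and invoke the quaternionic Hermitian spectral calculus: $F_0$ is Hermitian, equal to $1$ on $\ker A$ and to $1-\alpha\sigma_i(A)^2$ on $\mathcal{R}(A^H)$, and the hypothesis $\alpha<2/\|A\|_2^2$ makes $|1-\alpha\sigma_i(A)^2|<1$ for every nonzero singular value. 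Raising to the power $p^{\,k}\to\infty$ therefore collapses $F_0$ onto its eigenspace for the eigenvalue $1$, i.e.\ $F_k\to I_n-Q$ with $Q=A^\dagger A$ the orthogonal projector onto $\mathcal{R}(A^H)$. Consequently $X_kA=I_n-F_k\to Q$; moreover each $X_kA=I_n-F_0^{\,p^{\,k}}$ is Hermitian, so $X_\star A=Q$ is Hermitian, giving the fourth Penrose identity $(X_\star A)^H=X_\star A$. Uniqueness of the Moore--Penrose inverse then yields $X_\star=A^\dagger$, with operator-norm convergence by construction.

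I expect the main obstacle to be exactly this last spectral argument: one must \emph{not} reuse ``$\|F_k\|_2\to 0$'' from the column-rank setting (it is false here), but reason eigenvalue-by-eigenvalue, noting also that for even $p$ the powers $(1-\alpha\sigma_i^2)^{p^{\,k}}$ are nonnegative while the eigenvalue $1$ is untouched, so $F_0^{\,p^{\,k}}$ really does converge to the kernel projector $I_n-Q$. Apart from that, the proof only uses the three facts already isolated in the earlier remark — submultiplicativity of $\|\cdot\|_2$, the real spectral calculus for Hermitian quaternion matrices, and $(XY)^H=Y^HX^H$ with continuity of $(\cdot)^H$ — so noncommutativity needs no separate treatment.
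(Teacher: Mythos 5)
Your proof is correct and follows the same skeleton as the paper's argument: Lemma~\ref{lem:hp-resid} gives $E_{k+1}=E_k^{\,p}$, Lemma~\ref{lem:init} gives $\|E_0\|_2<1$, the increment bound and summability give Cauchy convergence of $(X_k)$, and the four Penrose identities pin the limit to $A^\dagger$. Where your write-up differs is that you actually \emph{justify} the claim $X_kA\to Q$, which the paper's proof simply asserts with ``taking the limit results in \dots $X_\star A=Q$.'' You are right that this is the one genuinely new point: in the row-rank case $F_0=I_n-\alpha A^HA$ is Hermitian with eigenvalue $1$ on $\ker A$ (so $\|F_0\|_2=1$), hence the norm-decay argument used for $E_k$ is unavailable for $F_k$, and one must reason eigenvalue-by-eigenvalue via $F_k=F_0^{\,p^k}\to I_n-Q$. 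That is exactly the mechanism the paper invokes (implicitly, by analogy with Theorem~\ref{thm:col-full-proof}(ii)) but never spells out for the hyperpower iteration, so your version is more complete.

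One small citation nuance worth flagging: the proof of Lemma~\ref{lem:hp-resid} derives $E_{k+1}=E_k^{\,p}$ for the left update and $F_{k+1}=F_k^{\,p}$ for the right update, whereas you use $F_{k+1}=F_k^{\,p}$ together with the \emph{left} update. The recurrence does hold in that setting too, because of the intertwining identity $E_k^{\,i}A=AF_k^{\,i}$ (which follows by induction from $E_kA=AF_k$), giving
\[
X_{k+1}A \;=\; X_k\sum_{i=0}^{p-1}E_k^{\,i}A
\;=\; X_kA\sum_{i=0}^{p-1}F_k^{\,i}
\;=\; (I_n-F_k)\sum_{i=0}^{p-1}F_k^{\,i}
\;=\; I_n-F_k^{\,p},
\]
but this computation is not in the lemma as proved, so you should either add it or phrase the appeal to the lemma more cautiously. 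With that line inserted, your proof stands as a complete and slightly more explicit version of the paper's argument.
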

\begin{proof}
By Lemma~\ref{lem:hp-resid}, $E_{k+1}=E_k^{\,p}$, hence $\|E_k\|_2\to 0$ super-geometrically. The increment is
\[
X_{k+1}-X_k = X_k\Bigl(\sum_{i=1}^{p-1}E_k^{\,i}\Bigr),
\]
and the same summability/product argument as above gives boundedness and convergence $X_k\to X_\star$. Taking the limit results in $AX_\star=I_m$ and $X_\star A=Q$ with $Q^H=Q$; hence $X_\star=A^\dagger$.
\end{proof}

\begin{rem}{(Local order and damping)}
The relations $F_{k+1}=F_k^p$ and $E_{k+1}=E_k^p$ imply local order $p\ge2$ once $\|F_0\|_2<1$ or $\|E_0\|_2<1$. If needed, one can start with a convexly damped polynomial $\sum_{i=0}^{p-1}\gamma_i R_k^{\,i}$ (with $R_k\in\{F_k,E_k\}$, $\gamma_i>0$, $\sum_i\gamma_i=1$) to enlarge the basin of attraction, then switch to the undamped form to recover pure order $p$.
\end{rem}

\section{Applications}\label{sec:applications}
In this section, we demonstrate the practical impact of our \emph{quaternion-native} iterative methods across four representative tasks: (i) tensor completion for color image and video recovery; (ii) reconstruction of Lorenz–attractor trajectories from corrupted observations; (iii) nonblind image deblurring posed as a quaternion linear inverse problem; and (iv) gesture recognition using quaternion features. In all cases, the computational core is either the solution of large quaternion linear systems or the evaluation of the Moore–Penrose pseudoinverse. Our methods serve as drop-in solvers inside CUR/cross approximation, as direct system solvers, or as fast pseudoinverse routines—without recourse to real embeddings. We report accuracy (e.g., PSNR/SSIM or trajectory error), iteration counts, and wall-clock time, and benchmark against QGMRES and QSVD-based baselines where appropriate.

\subsection{Image completion via CUR: where the pseudoinverse enters}
In this application, we recover a quaternion image/video matrix ${ A}\in\mathbb{H}^{m\times n}$ from partial observations ${ M}$ on an index set $\Omega\subset[m]\times[n]$. Let ${ \Omega}\in\{0,1\}^{m\times n}$ be the binary sampling mask and let $\oast$ denote the Hadamard (entrywise) product. We adopt the impute–reconstruct scheme used in \cite{wu2024efficient}, first introduced for tensors in \cite{ahmadi2023fast}:
\begin{align}
{ X}^{(n)} &\leftarrow \mathcal{L}\!\big({ C}^{(n)}\big), \label{Step1}\\
{ C}^{(n+1)} &\leftarrow { \Omega}\oast{ M}+({ 1}-{ \Omega})\oast{ X}^{(n)}, \label{Step2}
\end{align}
where ${ C}^{(n)}$ is the current filled-in matrix (with observed entries fixed to ${ M}$ and missing ones imputed), $\mathcal{L}$ returns a low-rank quaternion approximation of its input, and ${\bf 1}$ is the all-ones matrix.

\paragraph{Cross (CUR) approximation inside $\mathcal{L}$}
Given a target rank $r$ and index sets $J\subset[n]$ (columns) and $I\subset[m]$ (rows) with $|J|=|I|=r$, the CUR/cross model builds
\[
{ C} \;=\; { A}_{:J}\in\mathbb{H}^{m\times r},\qquad
{ R} \;=\; { A}_{I:}\in\mathbb{H}^{r\times n},\qquad
{ W} \;=\; { A}_{IJ}\in\mathbb{H}^{r\times r},
\]
and approximates ${ A}$ as
\begin{equation}\label{eq:CUR}
{ A} \;\approx\; { C}\,{ U}\,{ R}.
\end{equation}
The optimal middle factor (in Frobenius norm) is obtained by the least-squares problem
\[
{ U} \;=\; \arg\min_{{ U}}\;\|{ A}-{ C}{ U}{ R}\|_F,
\]
whose minimizer in the quaternion setting (respecting multiplication order) is
\begin{equation}\label{eq:U-opt}
{ U} \;=\; { C}^{\dagger}\,{ A}\,{ R}^{\dagger},
\end{equation}
where ${}^\dagger$ denotes the Moore--Penrose pseudoinverse over $\mathbb{H}$. In the classical \emph{cross} variant, one can also use
\begin{equation}\label{eq:W-pinverse}
{ U} \;=\; { W}^{\dagger}\qquad\text{with}\quad { W}={ A}_{IJ},
\end{equation}
which is equivalent to \eqref{eq:U-opt} when $I,J$ are chosen consistently. Both forms require the computation of pseudoinverses of small quaternion matrices (typically of dimensions  $r\times r$ or $r\times t$).

\paragraph{Why the pseudoinverse (and our solver) is central.}
Every call to the reconstruction operator $\mathcal{L}$ in \eqref{Step1} needs one or more Moore--Penrose pseudoinverses:
\begin{enumerate}
\item to compute ${ U}$ via \eqref{eq:U-opt} (requires ${ C}^{\dagger}$ and ${ R}^{\dagger}$), or
\item to compute ${ U}={ W}^{\dagger}$ via \eqref{eq:W-pinverse}.
\end{enumerate}
It is known that the formulation \eqref{eq:U-opt} is more stable than \ref{eq:W-pinverse} for computing the middle matrix ${ U}$, while it is more expensive due to computing the MP of larger matrices and also multiplication with the original data matrix ${ A}$\cite{ahmadi2023fast}. 
The approximation accuracy of sampled columns/rows in \eqref{eq:W-pinverse} depends on the concept of volume\footnote{For a square matrix, the absolute value of the determinant of a matrix is called its volume. For rectangular matrices, the volume is defined as the multiplication of its singular values which is an extension of the square case.} of the intersection matrix ${ W}$.  It has been demonstrated that selecting the intersection matrix $ W $ with the maximum volume yields optimal column/row sampling. However, identifying columns/rows with maximum volume intersection matrix is an NP hard problem, as it requires evaluating an exponential number of selections. The maxvol algorithm \cite{goreinov2010find} provides a near-optimal solution through a greedy approach to column and row sampling.

Therefore, throughout the impute--reconstruct iterations \eqref{Step1}--\eqref{Step2}, the bottleneck subroutines are quaternion pseudoinverses. The standard QSVD-based approach is accurate but becomes a runtime limiter and is not well-suited to large or sparse quaternion matrices. In contrast, our Newton--Schulz (NS) pseudoinverse computed \emph{directly in $\mathbb{H}$} replaces every occurrence of the pseudoinverse $({}^\dagger)$ above:
\[
{ C}^{\dagger} \;\leftarrow\; \text{NS--Q}({ C}),\quad
{ R}^{\dagger} \;\leftarrow\; \text{NS--Q}({ R}),\quad
{ W}^{\dagger} \;\leftarrow\; \text{NS--Q}({ W}),
\]
leading to ${ U}$ via \eqref{eq:U-opt} or \eqref{eq:W-pinverse}, and hence ${ X}^{(n)}={ C}{ U}{ R}$ in \eqref{Step1}. Because NS--Q uses only quaternion matrix multiplications and adjoints, which preserves structure, scales to large problems, and avoids real embeddings.

\paragraph{Column/row selection}
Index sets $I,J$ can be chosen by uniform sampling, greedy heuristics, or leverage-score strategies (using approximate top-$r$ singular subspaces). The quality guarantees for CUR (e.g., $\|{ A}-{ C}{ U}{ R}\|_F\le (2+\varepsilon)\|{ A}-{ A}_r\|_F$) transfer to the quaternion setting when multiplication order is fixed consistently. Our solver does not impose restrictions on the selection of $I$ and $J$; it only accelerates the required ${}^\dagger$ evaluations. In all our experiments, we have employed uniform sampling with replacement.

\begin{rem}{(Quaternion conventions)}
All products above are standard matrix products in $\mathbb{H}^{m\times n}$ with the usual (noncommutative) quaternion multiplication; we keep the left/right order in \eqref{eq:CUR}--\eqref{eq:U-opt}. The Moore--Penrose pseudoinverse is the quaternion one (satisfying the four MP conditions with quaternionic adjoint).
\end{rem}

\subsection{Lorenz–attractor quaternion filtering (NS vs.\ QGMRES)}

We encode a 3D time series as a quaternion signal
\[
x(t)=x_r(t)\,\mathbf{i}+x_g(t)\,\mathbf{j}+x_b(t)\,\mathbf{k},\qquad
y(t)=y_r(t)\,\mathbf{i}+y_g(t)\,\mathbf{j}+y_b(t)\,\mathbf{k},
\]
and seek a finite-length \emph{right} quaternion filter $\{w(s)\}_{s=0}^{n}$ with taps
\[
w(s)=w^{(0)}(s)+w^{(r)}(s)\,\mathbf{i}+w^{(g)}(s)\,\mathbf{j}+w^{(b)}(s)\,\mathbf{k}
\]
such that
\begin{equation}\label{eq:lorenz-conv}
y(t)\;=\;\sum_{s=0}^{n} x(t-s)\,\ast\, w(s),
\end{equation}
where $\ast$ denotes right quaternion multiplication.

\paragraph{Stacked linear system}
Collect $m{+}1$ output samples and $n{+}1$ taps to form the Toeplitz-like matrix $X\in\mathbb{H}^{(m+1)\times(n+1)}$, the unknown ${ w}\in\mathbb{H}^{n+1}$, and the target $Y\in\mathbb{H}^{(m+1)}$:
\begin{subequations}\label{eq:lorenz-blocks}
\begin{align}
X &=
\begin{bmatrix}
x(t) & x(t-1) & \cdots & x(t-n)\\
x(t+1) & x(t) & \cdots & x(t-n+1)\\
\vdots & \vdots & \ddots & \vdots\\
x(t+m) & x(t+m-1) & \cdots & x(t-n+m)
\end{bmatrix},\\[2mm]
{ w} &= [w(0),\,w(1),\dots,w(n)]^\top,\qquad
Y = [y(t),\,y(t+1),\dots,y(t+m)]^\top .
\end{align}
\end{subequations}
Then \eqref{eq:lorenz-conv} becomes the quaternion linear system
\begin{equation}\label{eq:lorenz-linear}
X \ast { w}\;=\;Y.
\end{equation}
In our tests, we set $m=n$, so $X\in\mathbb{H}^{N\times N}$ with $N=n{+}1$.

\paragraph{Target and input via the Lorenz system}
The target channels $(y_r,y_g,y_b)$ are generated by integrating the Lorenz system
\[
\dot{x}= \alpha (y-x),\qquad
\dot{y}= x(\rho - z)-y,\qquad
\dot{z}= xy-\beta z,
\]
with $(\alpha,\beta,\rho)=(10,\tfrac{8}{3},28)$ and $(x(0),y(0),z(0))=(1,1,1)$. We set
$y(t)=y_r(t)\,{\qi}+y_g(t)\,{\qj}+y_b(t)\,{\qk}$ and choose the input as a one–step delayed, noisy version
\[
x(t)=y_r(t-1)\,{\qi}+y_g(t-1)\,{\qj}+y_b(t-1)\,{\qk} + n(t),
\]
where $n(t)$ is small channel wise noise.

\paragraph{Solvers compared (both quaternion–native)}
\begin{itemize}
\item \textbf{QGMRES (baseline) from~\cite{doi:10.1137/20M133751X}.} Structure-preserving quaternion GMRES applied directly to \eqref{eq:lorenz-linear}.
\item \textbf{NS--Q (ours).} Quaternion Newton--Schulz applied directly in $\mathbb{H}$ to obtain $X^{-1}$ (square case) and solve ${ w}=X^{-1}Y$ without any real/complex embedding.
\end{itemize}

\paragraph{NS--Q iteration (square case)}
Let $I$ be the identity matrix of an appropriate size and $\|\cdot\|_2$ the operator norm. Initialize
$X_0=\alpha\,X^{\!*}$ with $\alpha\approx\|X\|_2^{-2}$. Iterate
\begin{equation}\label{eq:NSQ}
X_{k+1} \;=\; X_k\bigl(2I - X\,X_k\bigr),\qquad k=0,1,2,\dots
\end{equation}
and return ${ w}=X_K\,Y$. All products use right multiplication consistent with \eqref{eq:lorenz-linear}. If $\|\,I-XX_0\,\|_2\nless1$, we use a damped step
\[
X_{k+1}=X_k\bigl[(1-\gamma) I+\gamma(I-XX_k)\bigr],\quad 0<\gamma<1,
\]
until quadratic convergence of \eqref{eq:NSQ} is observed.

\paragraph{Stopping rule and metrics}
Both methods use the same relative residual and iteration cap:
\[
\mathrm{RelRes}\;=\;\frac{\|X\ast{ w}-Y\|_2}{\|Y\|_2}\ \le\ 10^{-6},\qquad
\text{max iterations} = N.
\]
Algorithm~\ref{alg:NS_LA} presents the Quaternion Newton--Schulz method for solving~\ref{eq:lorenz-linear}.
\RestyleAlgo{ruled}
\LinesNumbered
\begin{algorithm}[ht!]
\caption{NS--Q: Quaternion Newton--Schulz solver for $X\ast{ w}=Y$ (square case)}\label{alg:NS_LA}
\KwIn{$X\in\mathbb{H}^{N\times N}$, $Y\in\mathbb{H}^{N}$, tol, maxit}
\textbf{Init:} Estimate $\alpha\approx\|X\|_2^{-2}$ (short quaternion power method); set $X_0\gets \alpha\,X^{\!*}$\;
\For{$k=0,1,\dots,\text{maxit}$}{
  $E_k \gets I - X\,X_k$\;
  \If{$\|E_k\|_F \le \text{tol}$}{\textbf{break}}
  $X_{k+1}\gets X_k\,(2I - X\,X_k)$\ \tcp*{All ops in $\mathbb{H}$}
}
${ w}\gets X_k\,Y$\; \textbf{return} ${ w}$\;
\end{algorithm}

\subsection{Nonblind color image deblurring (FFT–NS–Q)}\label{sec:deblur}
We model a color image as a quaternion field
$X(u,v)=X_r(u,v)\,{\qi}+X_g(u,v)\,{\qj}+X_b(u,v)\,{\qk}$.
Given a real point–spread function (PSF) $h$ and quaternion noise $\mathcal{N}$, the observation is
\[
B \;=\; h \ast X \;+\; \mathcal{N}.
\]
We recover $X$ with Tikhonov regularization:
\[
\min_{X}\ \|AX-B\|_F^2+\lambda\|X\|_F^2
\quad\Longleftrightarrow\quad
(A^\top A+\lambda\,I)X=A^\top B,
\]
where $A$ is the (real) convolution operator induced by $h$.
Under circular boundary conditions, $A$ is BCCB and diagonalizes in the 2-D FFT basis. Let
$\widehat{h}=\mathrm{FFT2}(h_{\text{centered/padded}})$ and
$\widehat{B}=\mathrm{FFT2}(\B)$; then the normal matrix is diagonal with spectrum
$|\widehat{h}|^2+\lambda$, and the per-frequency closed form is
\[
\widehat{X} \;=\; \frac{\overline{\widehat{h}}\,\widehat{B}}{|\widehat{h}|^2+\lambda},
\qquad X=\mathrm{IFFT2}(\widehat{X}),
\]
which coincides with the QSLST solution (see Algorithm~2 of \cite{Fei2025}).
Centering the  PSF prior to applying the FFT is crucial to prevent phase ramps.


\paragraph{NS–Q with \textit{fft} mode (our adaptation)}
Instead of explicit division, we invert the diagonal
$T:=|\widehat{h}|^2+\lambda$ iteratively and pointwise in the frequency domain via Newton–Schulz:
\[
y_{0}=\alpha,\ \ \alpha\approx \frac{2}{\min T+\max T},\qquad
y_{k+1}=y_k\,(2 - T\,y_k),
\]
and set $\widehat{X} = y_K\cdot(\overline{\widehat{h}}\,\widehat{B})$ before inverse FFT.
This quaternion-native \emph{FFT–NS–Q} variant (i) avoids large matrices, (ii) stays entirely in $\mathbb{H}$,
and (iii) converges to the same solution as QSLST for circular BCs, while also supporting alternative diagonalizing transforms (e.g., DCT/DST) for non-circular boundary conditions.


We next quantify the practical efficiency and accuracy of the proposed methods through controlled experiments and real data.

\section{Numerical experiments}\label{sec:experiments}
This section presents the numerical results after implementing our algorithms. We used two execution stacks:
(i) \textbf{MATLAB} (R2023b) with the \textbf{QTFM} toolbox \cite{sangwine2005quaternion} on a laptop with Intel(R) Core(TM) i7-10510U CPU @ 1.80–2.30\,GHz and 16\,GB RAM; and
(ii) \textbf{Python} (3.11) with our quaternion-native library \textbf{QuatIca}~\cite{quatica2025} on a laptop with Apple \textbf{M4 Pro} and 24\,GB RAM.
For transparency, each experiment states the stack used. In brief:
random-matrix pseudoinverse and CUR-based completion ran in \emph{MATLAB+QTFM} (i7-10510U),
while the Lorenz-attractor filtering and FFT-based deblurring ran in \emph{QuatIca/Python} (M4 Pro).
CPU time are compared within the same stack/hardware only; cross-platform timing are reported but not used for speed claims.
Unless otherwise stated, we use the undamped setting with damping parameter $\gamma=1$; any deviations from this default are reported explicitly alongside the corresponding experiment.

For a matrix ${ X}$, the following errors are reported: 
\begin{eqnarray}
&& e_1=||{ X}^{\dagger}{ X}{ X}^{\dagger}-{ X}^{\dagger}||_F,\quad e_2=||{ X}{ X}^{\dagger}{ X}-{ X}||_F,\\
&& e_3=||({ X}^{\dagger}{ X})^H-{ X}^{\dagger}{ X}||_F,\quad e_4=||({ X}{ X}^{\dagger})^H-{ X}{ X}^{\dagger}||_F.
\end{eqnarray}

\begin{exa}
(Random matrices) In this example, we consider random matrices with elements drawn from a standard Gaussian distribution (zero mean and unit variance).  The random matrices used in our experiment are of size $n\times (n+50)$, where $n=100,200,\ldots,800$. The proposed iterative method and the basic algorithm are applied to compute the MP inverse of these matrices. In all experiments, we used 35 iterations for the suggested Algorithm. The run time and accuracy are shown in Figure \ref{fig:runtime-error}. From the computing time figure (left), we observe that the proposed iterative algorithm scales better compared to the baseline. Also, from Figure \ref{fig:runtime-error} (right), we can conclude that the proposed algorithm is not only better in terms of computational time but also provides more accurate results.

\begin{figure}[ht!]
\centering
\subfigure[Runtime vs.\ size]{%
  \includegraphics[width=.49\linewidth]{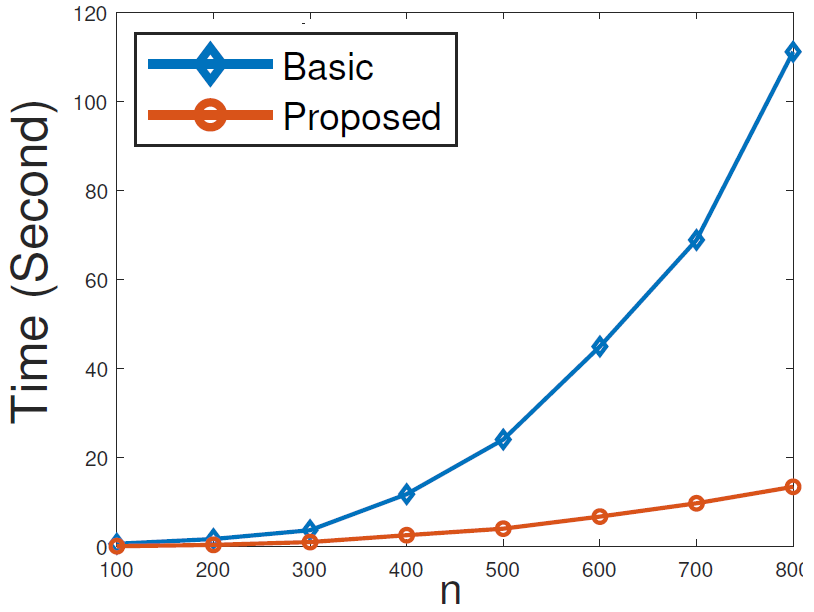}%
}\hfill
\subfigure[MP residual error vs.\ size]{%
  \includegraphics[width=.49\linewidth]{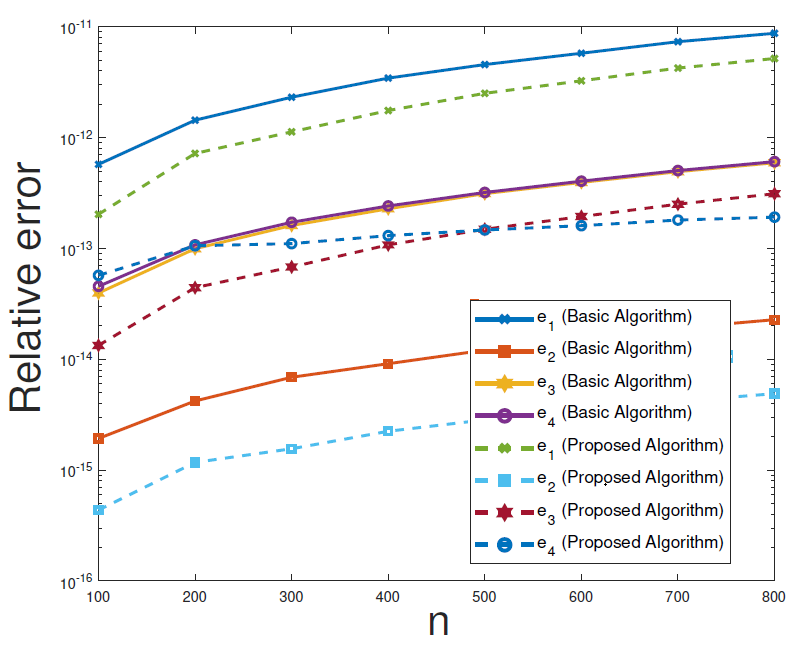}%
}
\caption{\small Comparison of QSVD-based MP inverse (baseline) and the proposed iterative method on random $n\times(n{+}50)$ quaternion matrices.}
\label{fig:runtime-error}
\end{figure}
\end{exa}

\begin{exa}

(Image and video completion)
We evaluate the proposed iterative approach on the Kodak image \textit{kodim16} (size \(512\times768\times3\)), by randomly removing 70\% of the pixels. A rank-60 CUR completion using our iterative MP and the QSVD-based baseline (``basic algorithm'') is run for 25 iterations. Figure~\ref{fig1:ex2} reports runtime and PSNR, and shows the recovered images; the iterative MP yields the same quality at markedly lower wall-time.

   \begin{figure}
\begin{center}
\includegraphics[width=1.1\linewidth]{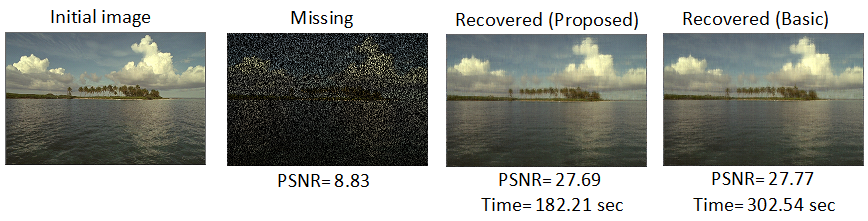}
\caption{Kodim16: original (left), 70\% missing, and reconstructions by QSVD-MP (``basic algorithm") vs.\ iterative MP.}\label{fig1:ex2}

\end{center}
\end{figure}

Color videos are naturally fourth–order tensors (height $\times$ width $\times$ channels $\times$ frames). We use the \textit{flowers} video from YouTube\footnote{\url{https://www.youtube.com/watch?v=bXlQ3Mw4uGc}} (original size $720\times1280\times3\times174$), which we downsample/crop  to $256\times256\times3\times174$. By mapping the RGB channels to the imaginary units $\{\qi,\qj,\qk\}$, we obtain a third–order quaternion tensor $\mathcal{X}\in\mathbb{H}^{256\times256\times174}$, where each frontal slice is a quaternion matrix. We randomly remove 70\% of entries and apply the same CUR–based completion as in the image experiment, comparing our iterative MP against a QSVD–based baseline. To enhance computational throughput, we divide the video sequence into non-overlapping 10-frame blocks, processing them in parallel, with each block undergoing 50 iterations at a target rank of 90. Figure~\ref{fig2:ex2} presents representative frames (original, masked, and reconstructions): the iterative MP method achieves comparable quality to the baseline while reducing wall-clock time by about a half.

  \begin{figure}
\begin{center}
\includegraphics[width=1\linewidth]{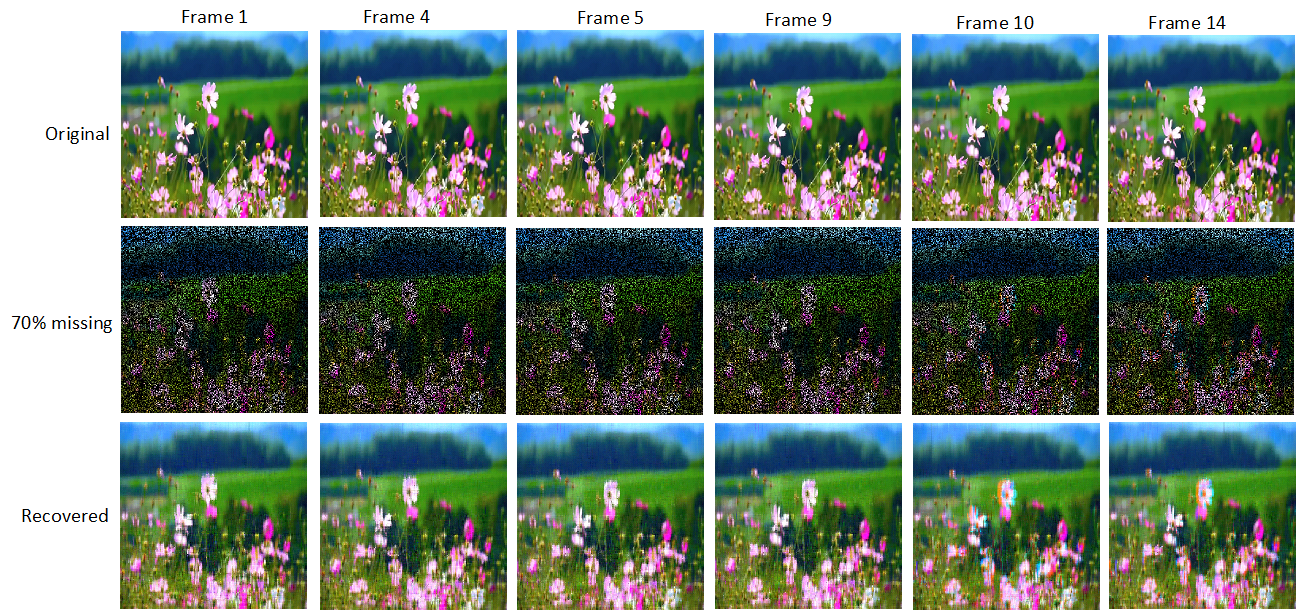}
\caption{Flowers video: original, observed (70\% missing), and reconstructions for selected frames.}\label{fig2:ex2}
\end{center}
\end{figure}

\textbf{Smoothing prior.}
Augmenting CUR with a mild spatial prior can further improve completion quality \cite{ahmadi2023fast}. We therefore incorporate a denoising step into each iteration of \eqref{Step1}: after computing the low-rank estimate ${X}^{(n)}$, we apply a 2-D Gaussian filter (using MATLAB's \verb|imgaussfilt| with a standard deviation of $\sigma=0.5$ and default kernel size) to smooth each video frame. Across iterations, we consistently observed improvements in PSNR and SSIM metrics, along with a reduction in visual artifacts; see Figure~\ref{fig3:ex2}.

  \begin{figure}
\begin{center}
\includegraphics[width=0.8\linewidth]{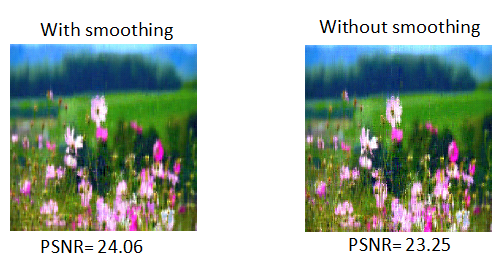}
\caption{\small{The effect of smoothing approaches on improving recovery performance.} }\label{fig3:ex2}
\end{center}
\end{figure}

Our results demonstrate the efficiency of the proposed methodology for processing color videos.

\end{exa}

\begin{exa}(Lorenz–attractor quaternion filtering (NS vs.\ QGMRES))

All results reported in this section were produced with \textbf{QuatIca}, our quaternion-native numerical linear algebra framework, available at \url{https://github.com/vleplat/QuatIca}. QuatIca provides efficient implementations of the solvers used here (e.g., QGMRES and Newton–Schulz in $\mathbb{H}$) and the accompanying experiment scripts. Unless otherwise noted, experiments were run on a laptop with an Apple M4~Pro processor and 24\,GB RAM.

In Table~\ref{tab:lorenz}, we report CPU time, iteration count, and final RelRes for $N\in\{50,75,100,150, 200\}$. 

\begin{table}[ht!]
\centering
\caption{Lorenz–attractor filtering: QGMRES vs.\ NS--Q on the $N\times N$ quaternion system \eqref{eq:lorenz-linear}.}
\label{tab:lorenz}
\begin{tabular}{lcccc}
\hline
$N$ & Method & Iterations & CPU time (s) & RelRes \\
\hline
50 & NS--Q   &  48 &  0.031 & 1.3e-09 \\
    & QGMRES &  50 &  1.080 & 1.2e-15 \\
\hline
75 & NS--Q   &  57 &  0.070 & 6.1e-09 \\
    & QGMRES &  71 &  3.001 & 2.3e-06 \\
\hline
100 & NS--Q   &  60 &  0.128 & 1.3e-08 \\
    & QGMRES & 100 &  8.239 & 1.8e-15 \\
\hline
150 & NS--Q   &  63 &  0.321 & 7.7e-09 \\
    & QGMRES & 150 & 29.550 & 2.1e-15 \\
\hline
200 & NS--Q   &  61 &  0.596 & 8.8e-10 \\
    & QGMRES & 200 & 73.920 & 4.2e-06 \\
\hline
\end{tabular}
\end{table}

\paragraph{Discussion}
Both solvers reach very small final residuals across all sizes. The NS-Q method consistently converges in $\approx$60 iterations and is one to two orders of magnitude faster than unpreconditioned QGMRES in this experiment (e.g., $0.60$\,s vs.\ $73.92$\,s at $N=200$), with CPU time growing gently with $N$.
While QGMRES often reaches near machine precision after $ N $ iterations, the residuals of NS–Q ($ 10^{-8} $ to $ 10^{-10} $) are already well-suited for the downstream applications considered here, with the option to further reduce residuals through additional iterations if required.

\paragraph{Preconditioning}

We observe that incorporating an LU preconditioner, available in \textsc{QuatIca}, can significantly decrease the wall time for QGMRES; however, this does not change the qualitative outcomes for these problems. Additionally, the same LU factors can be utilized to enhance the initialization or scaling of NS--Q, typically leading to a further reduction in its iteration count. Overall, these results support NS--Q as a fast, quaternion-native default solver for square systems of this type, while QGMRES remains a competitive option when high-quality preconditioners are available or when Krylov methods are preferred.

\paragraph{Trajectory visualization (T = 10 s)}
Beyond scalar residuals, we also assess the fidelity of recovered dynamics. We simulate the Lorenz system on $[0, T]$ with $T=10$ seconds and sample uniformly in time such that $N=200$. We then compare the true trajectories $(x(t),y(t),z(t))$ against the outputs reconstructed from the estimated quaternion filter ${w}$ obtained by each solver (QGMRES and NS--Q) in Figure~\ref{fig:lorenz-trajectories_2}. Additionally, we visualize the three 1D signals $x(t)$, $y(t)$, and $z(t)$ vs.\ time in Figure~\ref{fig:lorenz-trajectories}, overlaying:
\begin{itemize}
    \item the observed signal (solid light green);
  \item ground truth (solid black);
  \item NS--Q reconstruction (dashed blue);
  \item QGMRES reconstruction (dotted red).
\end{itemize}

These result show perfect reconstruction of the trajectories obtained by both methods.

\begin{figure}
\begin{center}
\includegraphics[width=0.95\linewidth]{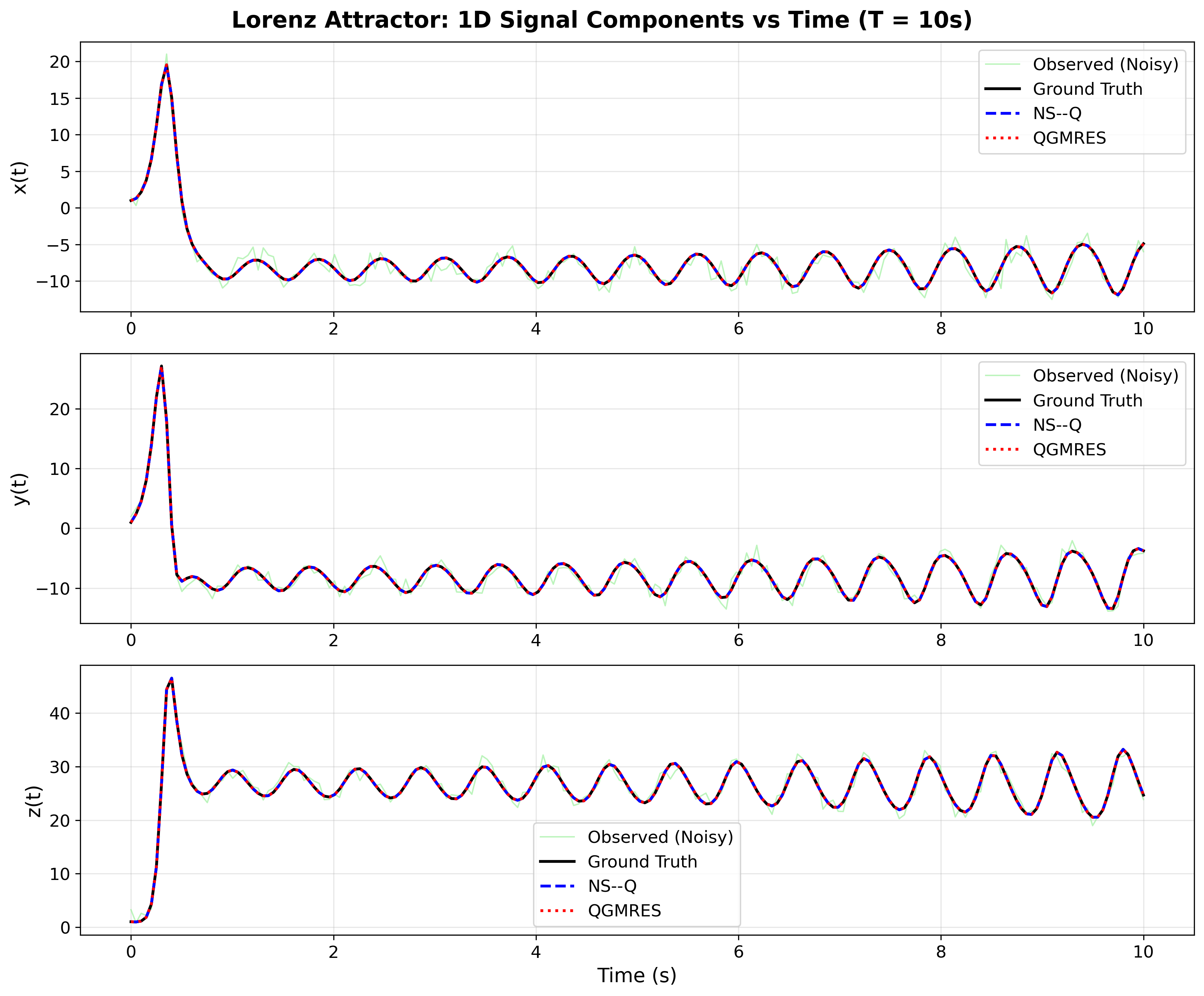}
\caption{True vs.\ reconstructed 1D signal components for the Lorenz system over $T=10$ s. Solid light green: observed signal (noisy), Solid black: ground truth; dashed blue: NS--Q; dotted red: QGMRES.}\label{fig:lorenz-trajectories}
\end{center}
\end{figure}

  \begin{figure}
\begin{center}
\includegraphics[width=0.95\linewidth]{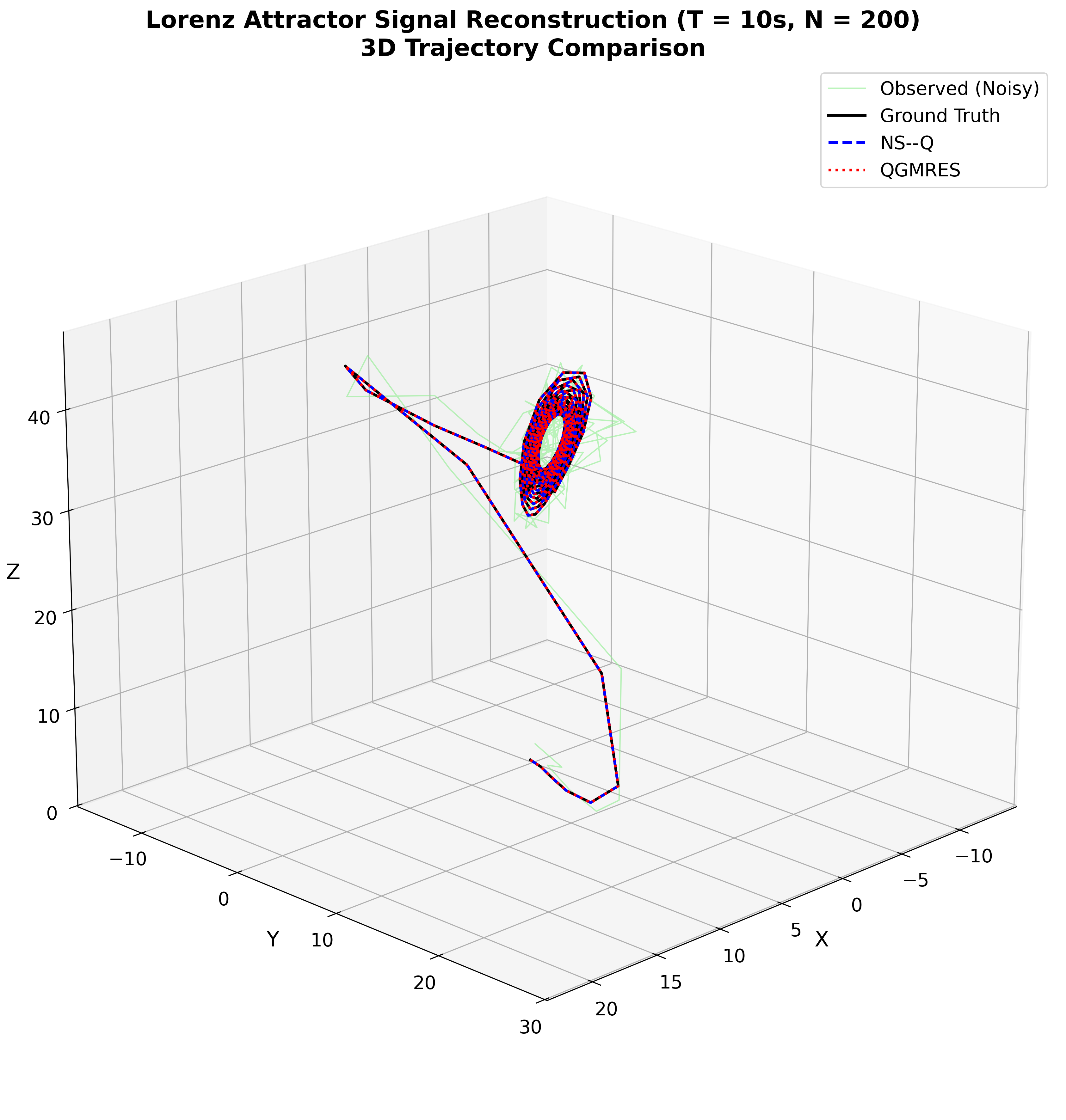}
\caption{True vs.\ reconstructed trajectories for the Lorenz system over $T=10$ s. Solid light green: observed signal (noisy), Solid black: ground truth; dashed blue: NS--Q; dotted red: QGMRES.}\label{fig:lorenz-trajectories_2}
\end{center}
\end{figure}

\end{exa}

\begin{exa}

We compare our quaternion–native \emph{FFT–NS–Q} solver against the \emph{QSLST–FFT} baseline (Algorithm~2 of \cite{Fei2025}, adapted to this setting) on two Kodak color images, \texttt{kodim16} and \texttt{kodim20}. To assess scalability, each method is run on centered $N\times N$ subimages extracted from the originals, increasing $N$ across trials.

Both methods use the same blur kernels (Gaussian PSF with radius \(r=4\), \(\sigma=1.0\); \(9\times9\) kernel) and noise level (30 dB SNR), circular boundary conditions, double precision, and the same FFT backend. For FFT–NS–Q, the per-frequency Newton–Schulz iteration is initialized with \(\alpha \approx 2/(\min T+\max T)\) for \(T=\lvert\widehat{h}\rvert^2+\lambda\) and iterated until the relative normal-equation residual drops below \(10^{-6}\). We select \(\lambda\) via a short grid search per image, and report the resulting \(\lambda_{\text{opt}}\) in Table~\ref{tab:deblur-results}; we also report CPU time (s), PSNR (dB), and SSIM.

\begin{table}[ht!]
\centering
\caption{Nonblind image deblurring: FFT--NS--Q vs.\ QSLST--FFT on $N\times N$ subimages from Kodak images.}
\label{tab:deblur-results}
\begin{tabular}{lccccc}
\hline
$N$ & $\lambda_{\text{opt}}$ & Method & CPU time (s) & PSNR (dB) & SSIM \\
\hline
\multicolumn{6}{l}{\textit{kodim16}} \\
32 & 0.020 & QSLST--FFT &  0.000 &  28.22 & 0.820 \\
    &         & FFT--NS--Q &  0.000 &  28.22 & 0.820 \\
\hline
64 & 0.050 & QSLST--FFT &  0.000 &  28.68 & 0.817 \\
    &         & FFT--NS--Q &  0.000 &  28.68 & 0.817 \\
\hline
128 & 0.050 & QSLST--FFT &  0.001 &  29.37 & 0.786 \\
    &         & FFT--NS--Q &  0.001 &  29.37 & 0.786 \\
\hline
256 & 0.050 & QSLST--FFT &  0.004 &  29.44 & 0.762 \\
    &         & FFT--NS--Q &  0.006 &  29.44 & 0.762 \\
\hline
400 & 0.050 & QSLST--FFT &  0.013 &  29.32 & 0.755 \\
    &         & FFT--NS--Q &  0.017 &  29.32 & 0.755 \\
\hline
512 & 0.050 & QSLST--FFT &  0.025 &  28.89 & 0.751 \\
    &         & FFT--NS--Q &  0.036 &  28.89 & 0.751 \\
\hline
\multicolumn{6}{l}{\textit{kodim20}} \\
32 & 0.020 & QSLST--FFT &  0.000 &  23.66 & 0.728 \\
    &         & FFT--NS--Q &  0.000 &  23.66 & 0.728 \\
\hline
64 & 0.020 & QSLST--FFT &  0.000 &  24.22 & 0.611 \\
    &         & FFT--NS--Q &  0.000 &  24.22 & 0.611 \\
\hline
128 & 0.020 & QSLST--FFT &  0.001 &  24.56 & 0.521 \\
    &         & FFT--NS--Q &  0.001 &  24.56 & 0.521 \\
\hline
256 & 0.050 & QSLST--FFT &  0.004 &  24.69 & 0.560 \\
    &         & FFT--NS--Q &  0.006 &  24.69 & 0.560 \\
\hline
400 & 0.050 & QSLST--FFT &  0.014 &  24.92 & 0.541 \\
    &         & FFT--NS--Q &  0.018 &  24.92 & 0.541 \\
\hline
512 & 0.050 & QSLST--FFT &  0.025 &  24.93 & 0.535 \\
    &         & FFT--NS--Q &  0.033 &  24.93 & 0.535 \\
\hline
\end{tabular}
\end{table}

Across both images and all crop sizes, FFT–NS–Q attains the same PSNR/SSIM as QSLST–FFT while running in comparable time (within a small constant factor). This demonstrates that our quaternion-native solver matches state-of-the-art accuracy and competes on speed without resorting to real embeddings. This parity makes NS–Q a practical drop-in alternative that preserves numerical quality and scales nicely with $N$.

To visually corroborate Table~\ref{tab:deblur-results}, Figure~\ref{fig:deblur-grids-512} shows, for \(N=512\), the originals, the blurred/noisy observations, and the QSLST–FFT and FFT–NS–Q reconstructions; confirming that both recover the scenes to the same visual quality.

\begin{figure}[ht!]
\centering
\subfigure[\textit{kodim16}, $N=512$]{
  \includegraphics[width=0.48\linewidth]{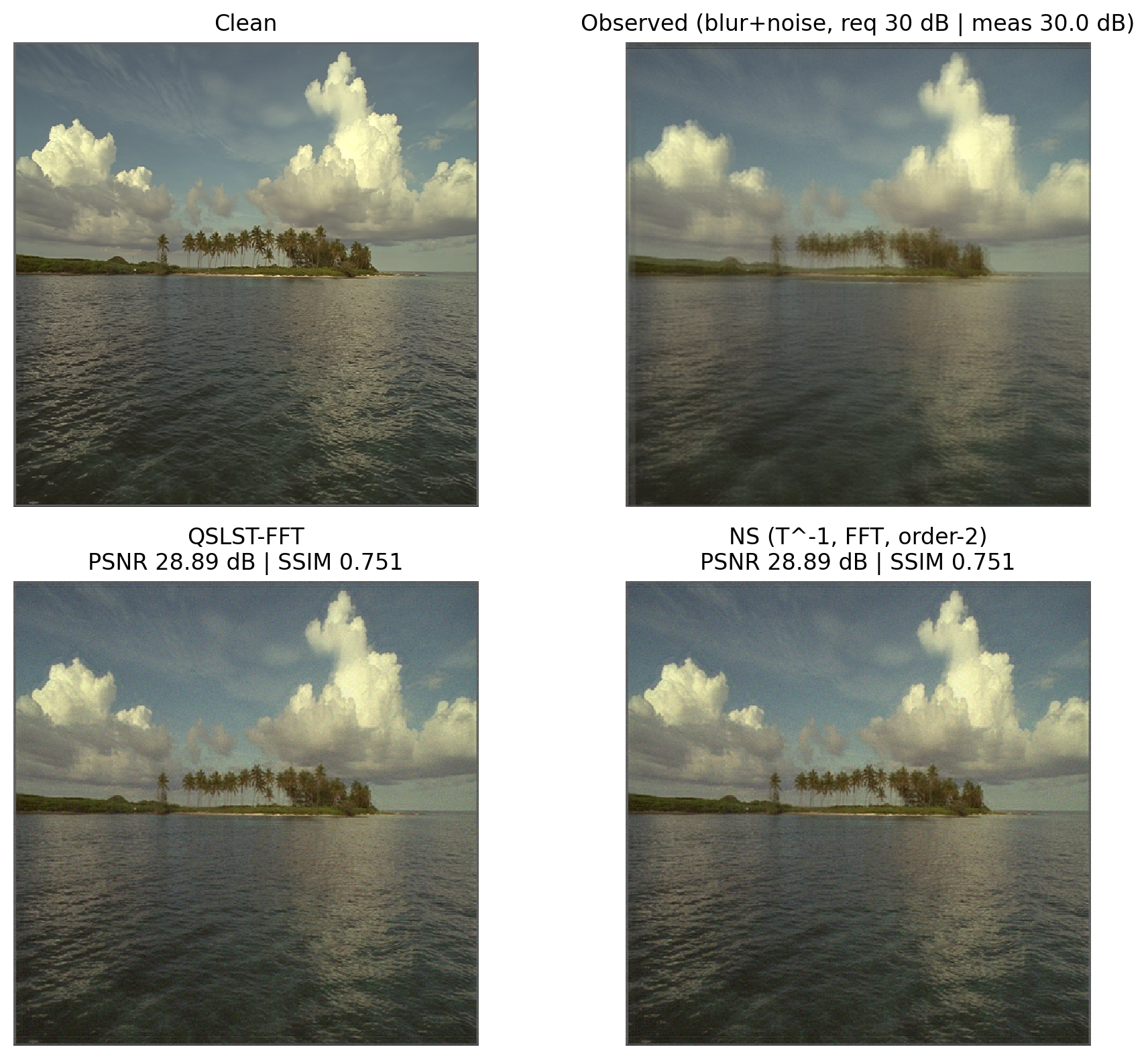}
}
\hfill
\subfigure[\textit{kodim20}, $N=512$]{
  \includegraphics[width=0.48\linewidth]{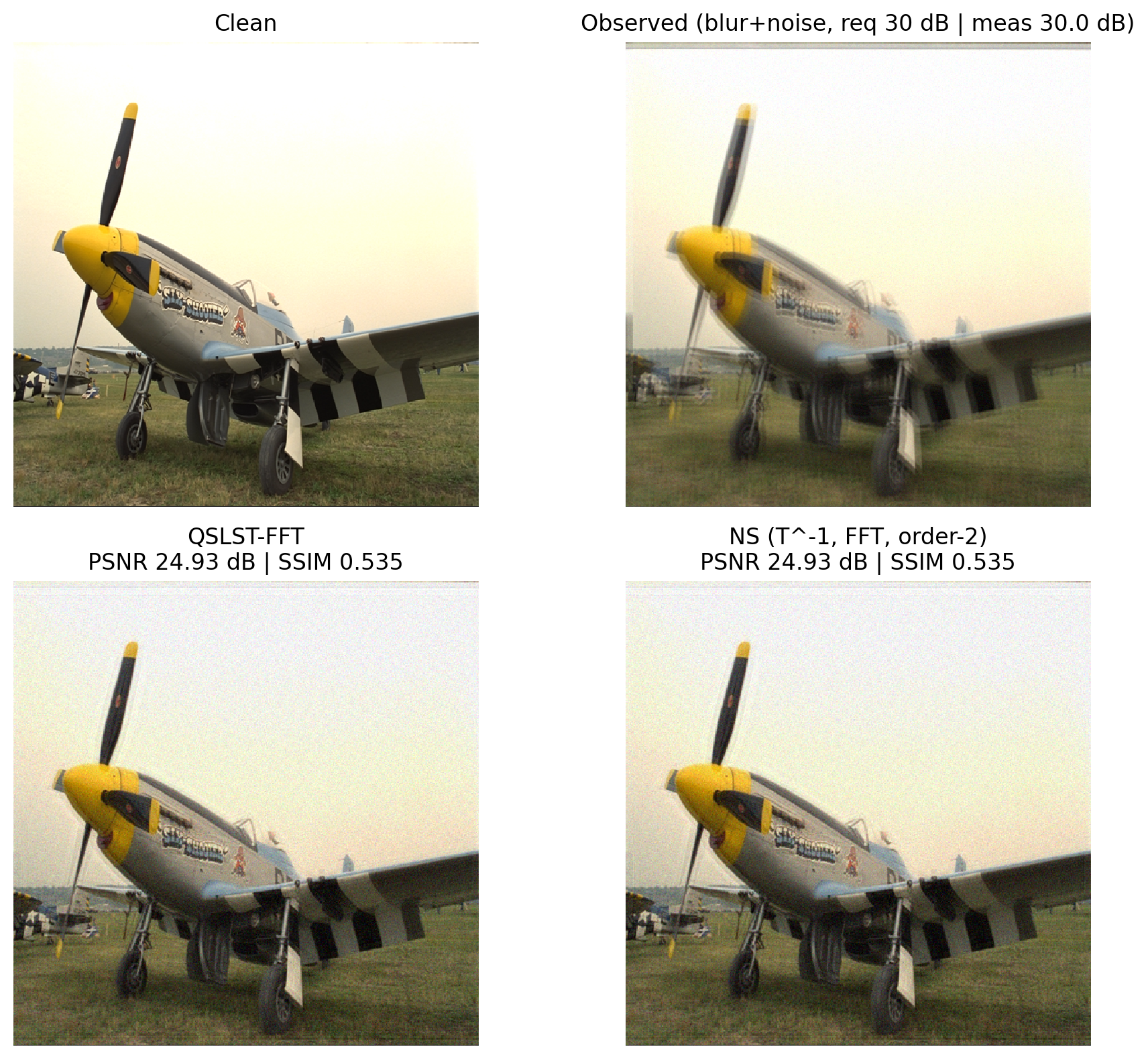}
}
\caption{Nonblind deblurring comparison at \(N=512\): each grid shows, from left to right, the original image, the blurred/noisy observation, the QSLST--FFT reconstruction, and the FFT--NS--Q reconstruction. Both methods produce visually indistinguishable results, consistent with Table~\ref{tab:deblur-results}.}
\label{fig:deblur-grids-512}
\end{figure}

\end{exa}

We will conclude by summarizing the main findings and outlining several promising directions for future work. 
Before drawing conclusions, we first introduce additional iterative schemes in the next section and conduct preliminary validation using our NS-based solvers.


\section{Additional Iterative Schemes and Numerical Comparisons}\label{sec:extended-algos}
\subsection{A Randomized Block Kaczmarz Method}

This section complements our deterministic Newton–Schulz (NS) solvers with a randomized sketch–and–project method in $\mathbb{H}$ for rectangular inputs. 
Unlike NS, which applies a Neumann polynomial to the residual $F_k=I-X_kA$ and enjoys high-order \emph{local} contraction, sketch–and–project (a block Kaczmarz family) makes inexpensive randomized projections onto sketched identity constraints and provides \emph{global} linear convergence in expectation. 
Both methods converge to the same fixed point, $ A^\dagger $, and complement each other effectively: a few randomized steps establish a favorable convergence basin, followed by a single Newton–Schulz (NS) or hyperpower step to achieve quadratic or order-$ p $ convergence behavior. In our experiments, we focus on the column variant ($ m \geq n $); adapting the hybrid approach to the row case is straightforward and reserved for future investigation.

\paragraph{Sketch–and–project updates}
We describe both side-consistent variants.
\begin{enumerate}
\item \textit{Column variant (full column rank, $XA=I_n$).} 
Given $A\in\mathbb{H}^{m\times n}$ with $m\ge n$, draw a thin right sketch $\Omega_k\in\mathbb{H}^{n\times r}$ and form $Y_k=A\,\Omega_k$. 
Project $X_k$ onto $\{X:\,X Y_k=\Omega_k\}$:
\begin{equation}\label{eq:rsp_update_main}
X_{k+1} \;=\; X_k \;+\; \big(\Omega_k - X_k Y_k\big)\,Y_k^\dagger,
\qquad
Y_k^\dagger=(Y_k^H Y_k)^{-1}Y_k^H .
\end{equation}
Since $A^\dagger Y_k=\Omega_k$, $A^\dagger$ is feasible. A relaxed form uses $X_{k+1}=X_k+\gamma(\Omega_k - X_k Y_k)Y_k^\dagger$ with $\gamma\in(0,2)$.
\item \textit{Row variant (full row rank, $AX=I_m$).} 
For $m\le n$, draw a left sketch $S_k\in\mathbb{H}^{m\times r}$, set $Z_k=S_k^H A$, and update
\[
X_{k+1} \;=\; X_k \;+\; Z_k^\dagger\big(S_k^H - Z_k X_k\big),
\qquad
Z_k^\dagger=(Z_k Z_k^H)^{-1}Z_k .
\]
\end{enumerate}

\paragraph{Initialization and seeding}
We employ lightweight initialization strategies that adhere to the side conventions. In the tall case (\( m \geq n \)), we initialize \( X_0 = \alpha A^H \) with a spectrally safe choice of \( \alpha = 1/\|A\|_F^2 \). In the wide case (\( m < n \)), we set \( X_0 = 0 \). This initialization is sufficient as each step involves a random orthogonal projection onto an affine set defined by \( \{X : XA\Omega = \Omega\} \) or \( \{X : S^HAX = S^H\} \).

\paragraph{Work per iteration (and what tends to dominate)}
With block size $r\ll \min\{m,n\}$, the column step performs
\[
\underbrace{A\Omega_k}_{O(mnr)},\quad
\underbrace{\Omega_k-X_kY_k}_{O(nmr)},\quad
\underbrace{Y_k^\dagger\text{ via thin QR or SPD normal equations}}_{O(mr^2+r^3)},\quad
\underbrace{(\Omega_k - X_kY_k)\,Y_k^\dagger}_{O(nmr)}.
\]
The total is \(O(nmr)+O(mr^2+r^3)\); the row variant replaces $A\Omega_k$ by $A^H S_k$ and $Y_k$ by $Z_k$.
In practice, when many randomized steps are required, the repeated thin QR factorizations (column case) or small SPD Gram solves (row case), required for constructing sketched pseudoinverses, emerge as the primary bottleneck affecting runtime (see the benchmark discussion).


\begin{rem}[Implementation details]
For column updates, we offer two interchangeable paths:
\emph{(QR)} compute a thin QR $Y=UR$ and set $Y^\dagger=R^{-1}U^H$; or
\emph{(SPD)} form $G=Y^H Y$ and solve $G Z=Y^H$ with a small quaternion CG micro-solver (add a $10^{-10}I$ ridge; fall back to a few Newton–Schulz steps for $G^{-1}$ if CG stalls).
Row updates always solve $(Z Z^H) W = S^H - Z X$ (same CG+ridge with NS fallback) and set $X \leftarrow X + Z^H W$.
\end{rem}

\paragraph{Monitoring progress without forming $F_k$}
To avoid $XA$ products at every step, we fix an independent test sketch $\Pi\in\mathbb{H}^{n\times s}$ with $s\ll n$ and precompute $A\Pi$ once.
We then stop when
\[
\frac{\|\,\Pi - X_k(A\Pi)\,\|_F}{\|\Pi\|_F}\ \le\ \text{tol},
\]
which estimates $\|I_n-X_kA\|_F$ at cost $O(nms)$ per check.

\begin{algorithm}[ht!]
\RestyleAlgo{ruled}\LinesNumbered
\caption{RSP--Q (column): randomized sketch--and--project for $XA=I_n$}\label{alg:RSP_right_fixed}
\KwIn{$A\in\mathbb{H}^{m\times n}$ full column rank; block $r$; tol; maxit}
\textbf{Init:} $X_0=\alpha A^H$ with $\alpha\in(0,2/\|A\|_2^2)$\;
Draw test sketch $\Pi\in\mathbb{H}^{n\times s}$; precompute $A\Pi$\;
\For{$k=0,1,2,\dots,\text{maxit}$}{
  Draw $\Omega_k\in\mathbb{H}^{n\times r}$\;
  $Y_k \gets A\,\Omega_k$;\quad $\mathcal{R}_k \gets \Omega_k - X_k Y_k$\;
  \tcp{Either thin QR or SPD normal equations to build $Y_k^\dagger$}
  Thin QR: $Y_k=U_k R^{(Y)}_k$;\quad $Y_k^\dagger \gets (R^{(Y)}_k)^{-1} U_k^H$\;
  $X_{k+1} \gets X_k + \mathcal{R}_k\,Y_k^\dagger$\;
  \If{$\|\,\Pi - X_{k+1}(A\Pi)\,\|_F/\|\Pi\|_F \le \text{tol}$}{\textbf{break}}
}
\textbf{return } $X_k$\
\end{algorithm}

\subsubsection{Convergence in expectation.}
Because \eqref{eq:rsp_update_main} is the orthogonal projection of $X_k$ onto an affine set that contains $X_\star=A^\dagger$, one has $\|X_{k+1}-X_\star\|_F\le\|X_k-X_\star\|_F$ deterministically.
When $\Omega_k$ has i.i.d.\ quaternion standard normal entries (or uses leverage-score sampling), the standard sketch–and–project argument yields
\begin{equation}\label{eq:rsp_rate_main}
\mathbb{E}\!\left[\|X_{k+1}-X_\star\|_F^2 \,\middle|\, X_k\right]
\;\le\;
\Bigl(1 - \frac{r\,\sigma_{\min}(A)^2}{\|A\|_F^2}\Bigr)\,\|X_k-X_\star\|_F^2,
\end{equation}
where the quaternion case follows from the real proof using the adjoint rules and submultiplicativity summarized in Section~\ref{sec:prelim}.
This bound is the lens through which we interpret the effect of the block size $r$ in our tests.

\begin{rem}[Practical guidance]
(i) Start with a small block $r$ (e.g., $8$–$32$) and increase it only if convergence stagnates. 
(ii) Applying Simple column/row rescaling to achieve unit $\ell_2$ norms often improves $\sigma_{\min}(A)$ and the convergence rate.
(iii) Importance sampling (approximate leverage scores) accelerates convergence further.
(iv) If a drawn sketch produces a rank-deficient $Y_k$ or $Z_k$, simply redraw a new sketch.
\end{rem}

\paragraph{From global progress to local acceleration: a simple hybrid}
RSP--Q makes inexpensive global progress, whereas NS (and its hyperpower variants) offers rapid local contraction once the residual is small.
We therefore interleave a few randomized steps with an exact right-residual hyperpower correction:
after $T$ RSP steps, compute $F=I_n-XA$ and update $X\leftarrow\bigl(\sum_{i=0}^{p-1}F^i\bigr)X$, with the summation evaluated using  a Paterson–Stockmeyer schedule. 
This “global–then–local’’ pattern is summarized below.

\begin{algorithm}[ht!]
\RestyleAlgo{ruled}\LinesNumbered
\caption{Hybrid RSP--Q + NS (column)}\label{alg:Hybrid_RSP_NS}
\KwIn{$A$; block $r$ (RSP) and order $p$ (NS); cycle length $T$; tol, maxit}
\textbf{Init:} $X\gets \alpha A^H$; draw $\Pi$; precompute $A\Pi$\;
\For{$\ell=0,1,2,\dots$}{
  \tcp{Randomized sketch--and--project phase}
  \For{$t=1$ \KwTo $T$}{
    one step of Algorithm~\ref{alg:RSP_right_fixed} on $X$ (same $\Pi$)\;
  }
  \tcp{One exact NS/hyperpower step on the right residual}
  $F \gets I_n - X A$;\quad $X \gets \Big(\sum_{i=0}^{p-1} F^{\,i}\Big)\,X$ \tcp*{Paterson--Stockmeyer evaluation}
  \If{$\|\,\Pi - X(A\Pi)\,\|_F/\|\Pi\|_F \le \text{tol}$}{\textbf{break}}
}
\textbf{return } $X$
\end{algorithm}

In our experience, $T\in\{5,10\}$ and $p\in\{2,4,8\}$ strike a good balance: the RSP phase keeps each cycle inexpensive ($O(nmr)$ per step), and the infrequent hyperpower correction restores the fast local contraction characteristic of NS once within its basin.

\paragraph{Related work}
Our randomized solver is a quaternion–native adaptation of the sketch–and–project (block Kaczmarz) framework for matrix inversion due to Gower and Richtárik \cite{gower2017randomized}, specialized here to rectangular pseudoinverses in $\mathbb{H}$ with explicit left/right identities (via the adjoint), and coupled with Newton–Schulz for global–then–local acceleration. Unlike \cite{gower2017randomized}, which treats real square matrices (and only gestures at pseudoinverse extensions), we provide the quaternionic formulation, a convergence statement in $\mathbb{H}$, and an implementation that avoids real/complex embeddings.

\subsection{Conjugate Gradient Strikes Back}

We minimize the convex quadratic
\[
f(X)=\tfrac12\|XA-I_n\|_F^2,\qquad X\in\mathbb{H}^{n\times m},
\]
with Frobenius inner product $\langle U,V\rangle_F=\mathrm{Re}\,\mathrm{tr}(U^H V)$.
A short calculation (using the adjoint rules in $\mathbb{H}$) gives
\[
\nabla f(X)=(XA-I_n)A^H,\qquad
\mathcal{H}[\Delta]=\Delta(AA^H).
\]
Thus $f$ is a quadratic with a constant right--multiplication Hessian.

\textit{Why this converges to $A^\dagger$ ? }
The set of minimizers of $f$ is the affine manifold $\{X:\ XA=I_n\}$ (all right inverses).
If we initialize $X_0=\alpha A^H$ and only add directions with right factor $A^H$ (as CGNE--Q will do), then all iterates remain in the subspace $\{Y A^H\}$:
\[
X_k\in\mathrm{span}\{A^H\}\quad\Longrightarrow\quad
X_k=Y_k A^H\;\;\text{for some $Y_k$}.
\]
Within this subspace, the only right inverse is $A^\dagger$:
$X A=I_n$ and $X=Y A^H$ imply $Y(A^HA)=I_n$, hence $Y=(A^HA)^{-1}$ and $X=A^\dagger$.
Therefore, any method that (i) stays in $\mathrm{span}\{A^H\}$ and (ii) drives $\|XA-I_n\|_F\to0$ converges to $A^\dagger$.

\paragraph{Steepest descent step (for intuition)}
Let $R_k:=I_n-X_kA$ be the identity residual and $Z_k:=-\nabla f(X_k)=R_k A^H$.
For a direction $D_k$ and step $\alpha$, the new residual is
\(
(X_k+\alpha D_k)A-I_n= -R_k+\alpha (D_kA).
\)
Minimizing $f(X_k+\alpha D_k)=\tfrac12\|-R_k+\alpha D_kA\|_F^2$ in $\alpha$ yields
\[
\alpha_k=\frac{\langle R_k,\,D_kA\rangle_F}{\|D_kA\|_F^2}.
\]
If we choose steepest descent $D_k=Z_k=R_k A^H$, then
\[
\alpha_k=\frac{\|Z_k\|_F^2}{\|Z_k A\|_F^2}
\quad\text{(since }\langle R_k,\,Z_kA\rangle_F=\|Z_k\|_F^2\text{)},
\]
which already avoids any Kronecker products and uses only multiplies by $A$ or $A^H$.

\paragraph{Conjugate Gradient on the normal equations (CGNE--Q)}
CG improves over steepest descent by building \emph{conjugate} directions w.r.t.\ the Hessian:
$\langle D_i,\,\mathcal{H}[D_j]\rangle_F=0$ for $i\neq j$.
In our setting, this is equivalent to $\langle D_iA,\,D_jA\rangle_F=0$.

\begin{algorithm}[t]
\RestyleAlgo{ruled}\LinesNumbered
\caption{CGNE--Q (column): Conjugate Gradient for $f(X)=\tfrac12\|XA-I_n\|_F^2$}\label{alg:CGNEQ}
\KwIn{$A\in\mathbb{H}^{m\times n}$ (full column rank), tol, maxit}
\textbf{Init:} $X_0=\alpha A^H$ with $\alpha\in(0,2/\|A\|_2^2)$\;
$R_0 \gets I_n - X_0 A$;\quad $Z_0 \gets R_0 A^H$;\quad $D_0 \gets Z_0$ \tcp*{$Z_k=-\nabla f(X_k)$}
\For{$k=0,1,2,\dots,\text{maxit}$}{
  $W_k \gets D_k A$ \tcp*{image of the search direction}
  \If{$\|W_k\|_F=0$}{\textbf{break}}
  $\alpha_k \gets \dfrac{\|Z_k\|_F^2}{\|W_k\|_F^2}$ \tcp*{exact line search in one scalar}
  $X_{k+1} \gets X_k + \alpha_k D_k$\;
  $R_{k+1} \gets R_k - \alpha_k W_k$ \tcp*{$R=I_n-XA$}
  \If{$\|R_{k+1}\|_F \le \text{tol}$}{\textbf{break}}
  $Z_{k+1} \gets R_{k+1} A^H$ \tcp*{new negative gradient}
  $\beta_k \gets \dfrac{\|Z_{k+1}\|_F^2}{\|Z_k\|_F^2}$ \tcp*{Fletcher--Reeves; Polak--Ribi\`ere also admissible}
  $D_{k+1} \gets Z_{k+1} + \beta_k D_k$\;
}
\textbf{return } $X_k$\;
\end{algorithm}

\noindent
\emph{Properties.} (i) All operations are quaternion--native and respect right multiplication order.
(ii) For each iteration, we need one $D_kA$ and one $(\cdot)A^H$ multiply; no factorizations of $A$ are required.
(iii) Because $X_0\propto A^H$ and $D_k$ are linear combinations of $Z_j=R_j A^H$, the iterates stay in $\mathrm{span}\{A^H\}$ and converge to $A^\dagger$ once $\|R_k\|\to0$.

\paragraph{Right preconditioning via a thin sketch}
To cut iteration counts without forming $(AA^H)^{-1}$, we build a Nystr\"om--type approximate inverse using a thin block. Now draw $\Omega\in\mathbb{H}^{n\times r}$ ($r\ll n$) and form $Y:=A\Omega\in\mathbb{H}^{m\times r}$.
Then
\[
M \;:=\; (YY^H)^\dagger \;=\; Y\,(Y^H Y)^{-1}(Y^H Y)^{-1}Y^H
\;\approx\; (AA^H)^{-1}\quad\text{on }\mathrm{range}(Y).
\]
Use a right-preconditioned CGNE: replace $Z_k$ by $\widetilde{Z}_k:=Z_k M$ in the formulas for $\alpha_k,\beta_k$ and in the direction seed ($D_0=\widetilde{Z}_0$).
This only adds multiplies by $Y$/$Y^H$ and small $r\times r$ solves.

\paragraph{Row--rank analogue ($AX=I_m$)}
For full row rank ($m\le n$), the symmetric development applies to $g(X)=\tfrac12\|AX-I_m\|_F^2$ with gradient $A^H(AX-I_m)$ and Hessian $\widetilde{\mathcal{H}}[\Delta]=A^H A\,\Delta$. The matrix--form CG recurrences mirror Algorithm~\ref{alg:CGNEQ}, replacing right multiplies by left ones in the obvious way.

\subsection{Simple numerical tests}

In this section, we provide a simple benchmark comparing the efficiency of all the iterative methods proposed through this paper for computing the MP-inverses of quaternion matrices.
We focus on computing the MP inverse of random quaternion matrices of size $(n + 20) \times n$ with $n \in \
\{20, 50, 100, 150, 200\}$. Unless noted otherwise, we set \texttt{tol}$=10^{-8}$ for NS and CGNE--Q and \texttt{tol}$=10^{-3}$ for RSP--Q/Hybrid to emphasize runtime differences; all methods can be driven to tighter tolerances with proportional increases in iterations.

In Figure~\ref{fig:bench_MP}, we report the relative errors $\frac{\|XA - I_n\|_F}{\|I_n\|_F}$ with respect to size $n$ and the computational time for each tested method, i.e., the NS, the third-order ($p=3$) NS, RSQP (Alg.~\ref{alg:RSP_right_fixed}, the Hybrid RSQP+NS with order $p=4$ (Alg.~\ref{alg:Hybrid_RSP_NS}) and CGNE-Q (Alg.~\ref{alg:CGNEQ}). 
All results reported in this section were produced with \textbf{QuatIca}~\cite{quatica2025}, with the test script available online.

  \begin{figure}
\begin{center}
\includegraphics[width=0.99\linewidth]{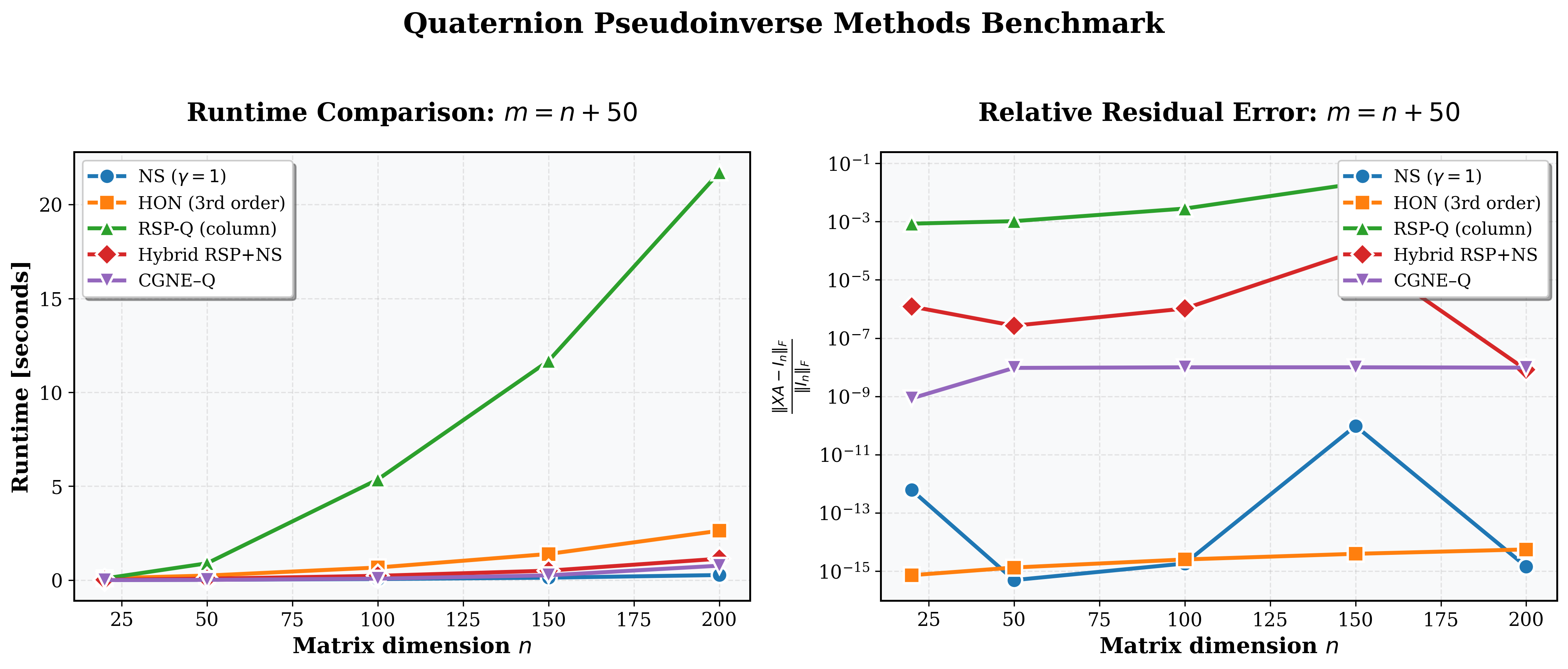}
\caption{Benchmark for proposed iterative methods for computing pseudo-inverses.}\label{fig:bench_MP}
\end{center}
\end{figure}

\paragraph{Cross--method comparison and the RSP--Q bottleneck}
On tall random matrices ($m=n+50$), the damped NS baseline remains the most accurate and fastest overall. The third--order NS attains essentially identical accuracy with slightly higher CPU time, consistent with its higher per--iteration polynomial cost. The hybrid RSP+NS is competitive: randomized steps reduce the right residual, and the injected hyperpower step recovers fast local contraction. CGNE--Q is promising, with predictable per--iteration cost (one multiply by $A$ and one by $A^H$) and strong final relative residuals.

By contrast, \textbf{RSP--Q} achieves acceptable accuracy but at a surprisingly high computation cost, even though sketching reduces the dominant matrix--matrix multiplications to $O(mnr)$. The reason is not due to the multiplies but the per--iteration overhead of forming a sketched pseudo--inverse: repeated thin QR (column) or small SPD Gram solves (row). In our implementation, these micro--solvers (plus stability safeguards: tiny ridge and CG/NS fallbacks) dominate runtime when many randomized steps are needed. We will target this bottleneck in future work via incremental/blocked QR, preconditioned Gram solves, and sketch reuse.

\newpage
\section{Conclusion and future work}\label{sec:conclu}
We proposed quaternion--native Newton--Schulz methods for computing Moore--Penrose pseudoinverses, together with higher--order (hyperpower) variants, and established their convergence directly in $\mathbb{H}$. 
The main highlights include: (i) simple initialization $X_0=\alpha A^H$ with a provable contraction region; (ii) left/right updates that respect noncommutativity and target the appropriate identities for rectangular inputs; and (iii) exact residual recurrences ensuring quadratic (or order-$p$) local convergence.
In addition, we introduced two complementary iterative approaches: a randomized sketch--and--project method (RSP--Q);  and a quaternion conjugate gradient on the normal equations (CGNE--Q) and a simple hybrid that alternates a few RSP--Q steps with an exact hyperpower NS correction.
In controlled synthetic experiments, NS with spectral scaling remains the fastest and most accurate; higher--order NS matches the accuracy at slightly higher runtime; the hybrid attains good accuracy with competitive time; CGNE--Q is promising; and RSP--Q reaches acceptable accuracy but can be slower due to repeated thin QR or small SPD Gram solves needed to form sketched pseudo--inverses.
In our three application case studies (CUR completion, Lorenz filtering, FFT--based deblurring), we utilized only the NS family, which matches the accuracy of QSVD--based MPs  while reducing runtime and memory usage.

There are several directions for future. 
First, extending the analysis to rank-deficient inputs with explicit $\lambda$-regularization and establishing nonasymptotic error bounds would strengthen robustness guarantees. 
Second, exploiting structure (e.g., sparsity, Toeplitz/BCCB, or low-rank factorizations) and batch processing can further reduce quaternion matrix multiplications, accelerating the higher--order schemes. 
Third, generalizing the algorithms to quaternion tensors, split quaternion, dual quaternions and block-structured MPs (e.g.,for multi-view learning) is natural and practically relevant. Fourth, we are currently developing iterative method to compute other types of generalized inverse of quaternions matrices, such as group inverse and Drazin inverse.  
Finally, implementing the algorithms on GPUs and exploring mixed-precision variants are promising directions, given that the iterations are matrix-free and dominated by matrix–matrix products.

\newpage
\section*{Declarations}

\paragraph{Funding}
The authors did not receive support from any organization for the submitted work.

\paragraph{Competing interests}
The authors have no competing interests to declare that are relevant to the content of this article.

\paragraph{Ethics approval}
Not applicable.

\paragraph{Consent to participate}
Not applicable.

\paragraph{Consent for publication}
Not applicable.

\paragraph{Data availability}
All datasets used in this study are publicly available from established repositories; persistent links/citations are provided in the main text and reference list. The synthetic datasets used in our experiments are fully reproducible and can be regenerated using the test scripts included in the open-source \texttt{QuatIca} framework (see Code availability). No new proprietary datasets were generated.

\paragraph{Code availability}
The implementation used in our experiments is available in the open-source \texttt{QuatIca} framework (GitHub; see the software citation in the reference list). A current version can be accessed at \url{https://github.com/vleplat/QuatIca}. The MATLAB implementation of some experiments and algorithms are available at \\ \url{https://drive.google.com/file/d/1HyHtEWiewEXtF508pUaYcdP0g76rZNS3/view?usp=sharing}.


\paragraph{Authors’ contributions}
The authors contributed as follows:
\begin{itemize}
  \item \textbf{Valentin Leplat:} conceptualization; methodology; theoretical development; algorithm design and implementation; software (\texttt{QuatIca}); experiments; applications; writing—original draft.
  \item \textbf{Salman Ahmadi-Asl:} algorithm design; experiments; implementation of some algorithms in MATLAB, applications, data curation; writing—review \& editing.
  \item \textbf{JunJun Pan:} supervision; writing—review \& editing.
  \item \textbf{Ning Zheng:} supervision; writing—review \& editing.
\end{itemize}
All authors read and approved the final manuscript.

\newpage


\bibliographystyle{spmpsci}      
\bibliography{cas-refs}          

\begin{thebibliography}{10}
\providecommand{\url}[1]{{#1}}
\providecommand{\urlprefix}{URL }
\expandafter\ifx\csname urlstyle\endcsname\relax
  \providecommand{\doi}[1]{DOI~\discretionary{}{}{}#1}\else
  \providecommand{\doi}{DOI~\discretionary{}{}{}\begingroup \urlstyle{rm}\Url}\fi

\bibitem{adler1995quaternionic}
Adler, S.L.: Quaternionic quantum mechanics and quantum fields, vol.~88.
\newblock Oxford University Press (1995)

\bibitem{ahmadi2023fast}
Ahmadi-Asl, S., Asante-Mensah, M.G., Cichocki, A., Phan, A.H., Oseledets, I., Wang, J.: Fast cross tensor approximation for image and video completion.
\newblock Signal Processing \textbf{213}, 109121 (2023)

\bibitem{ahmadi2017efficient}
Ahmadi-Asl, S., Beik, F.P.A.: An efficient iterative algorithm for quaternionic least-squares problems over the generalized-(anti-) bi-hermitian matrices.
\newblock Linear and Multilinear Algebra \textbf{65}(9), 1743--1769 (2017)

\bibitem{ahmadi2017iterative}
Ahmadi-Asl, S., Beik, F.P.A.: Iterative algorithms for least-squares solutions of a quaternion matrix equation.
\newblock Journal of Applied Mathematics and Computing \textbf{53}(1), 95--127 (2017)

\bibitem{ahmadi2025pass}
Ahmadi-Asl, S., Kooshkghazi, M.N., Leplat, V.: Pass-efficient randomized algorithms for low-rank approximation of quaternion matrices.
\newblock arXiv preprint arXiv:2507.13731  (2025)

\bibitem{bhadala2025generalized}
Bhadala, N., Behera, R.: Generalized inverses of quaternion matrices with applications.
\newblock arXiv preprint arXiv:2506.19308  (2025)

\bibitem{brown1993matrices}
Brown, W.C.: Matrices over commutative rings.
\newblock (No Title)  (1993)

\bibitem{bunse1989quaternion}
Bunse-Gerstner, A., Byers, R., Mehrmann, V.: A quaternion qr algorithm.
\newblock Numerische Mathematik \textbf{55}(1), 83--95 (1989)

\bibitem{chang2024randomized}
Chang, C., Yang, Y.: Randomized large-scale quaternion matrix approximation: Practical rangefinders and one-pass algorithm.
\newblock arXiv preprint arXiv:2404.14783  (2024)

\bibitem{chen2025randomized}
Chen, J.F., Wang, Q.W.: Randomized algorithm for constrained quaternion singular value decomposition and its applications: J.-f. chen, q.-w. wang.
\newblock Journal of Scientific Computing \textbf{104}(2), 59 (2025)

\bibitem{chen2023efficient}
Chen, Y., Jia, Z., Peng, Y., Peng, Y.: Efficient robust watermarking based on structure-preserving quaternion singular value decomposition.
\newblock IEEE Transactions on Image Processing \textbf{32}, 3964--3979 (2023)

\bibitem{ell2014quaternion}
Ell, T.A., Le~Bihan, N., Sangwine, S.J.: Quaternion Fourier transforms for signal and image processing.
\newblock John Wiley \& Sons (2014)

\bibitem{Fei2025}
Fei, W., Tang, J., Shan, M.: Quaternion special least squares with tikhonov regularization method in image restoration.
\newblock Numerical Algorithms  (2025).
\newblock \doi{10.1007/s11075-025-02187-6}.
\newblock \urlprefix\url{https://doi.org/10.1007/s11075-025-02187-6}

\bibitem{feng2017new}
Feng, K., Li, J., Zhang, X., Shen, C., Bi, Y., Zheng, T., Liu, J.: A new quaternion-based kalman filter for real-time attitude estimation using the two-step geometrically-intuitive correction algorithm.
\newblock Sensors \textbf{17}(9), 2146 (2017)

\bibitem{finkelstein1962foundations}
Finkelstein, D., Jauch, J.M., Schiminovich, S., Speiser, D.: Foundations of quaternion quantum mechanics.
\newblock Journal of mathematical physics \textbf{3}(2), 207--220 (1962)

\bibitem{forbes2015fundamentals}
Forbes, J.R.: Fundamentals of spacecraft attitude determination and control [bookshelf].
\newblock IEEE Control Systems Magazine \textbf{35}(4), 56--58 (2015)

\bibitem{goldman2022rethinking}
Goldman, R.: Rethinking quaternions.
\newblock Springer Nature (2022)

\bibitem{golub2013matrix}
Golub, G.H., Van~Loan, C.F.: Matrix computations.
\newblock JHU press (2013)

\bibitem{goreinov2010find}
Goreinov, S.A., Oseledets, I.V., Savostyanov, D.V., Tyrtyshnikov, E.E., Zamarashkin, N.L.: How to find a good submatrix.
\newblock In: Matrix Methods: Theory, Algorithms And Applications: Dedicated to the Memory of Gene Golub, pp. 247--256. World Scientific (2010)

\bibitem{gower2017randomized}
Gower, R.M., Richt{\'a}rik, P.: Randomized quasi-newton updates are linearly convergent matrix inversion algorithms.
\newblock SIAM Journal on Matrix Analysis and Applications \textbf{38}(4), 1380--1409 (2017)

\bibitem{grassia1998practical}
Grassia, F.S.: Practical parameterization of rotations using the exponential map.
\newblock Journal of graphics tools \textbf{3}(3), 29--48 (1998)

\bibitem{halko2011finding}
Halko, N., Martinsson, P.G., Tropp, J.A.: Finding structure with randomness: Probabilistic algorithms for constructing approximate matrix decompositions.
\newblock SIAM review \textbf{53}(2), 217--288 (2011)

\bibitem{hu2025structure}
Hu, Z.H., Ling, S.T., Jia, Z.G.: Structure-preserving joint lanczos bidiagonalization with thick-restart for the partial quaternion gsvd.
\newblock Numerical Algorithms \textbf{99}(2), 873--894 (2025)

\bibitem{huang2015moore}
Huang, L., Wang, Q.W., Zhang, Y.: The moore--penrose inverses of matrices over quaternion polynomial rings.
\newblock Linear Algebra and its Applications \textbf{475}, 45--61 (2015)

\bibitem{jia2021structure}
Jia, Z., Ng, M.K.: Structure preserving quaternion generalized minimal residual method.
\newblock SIAM Journal on Matrix Analysis and Applications \textbf{42}(2), 616--634 (2021)

\bibitem{doi:10.1137/20M133751X}
Jia, Z., Ng, M.K.: Structure preserving quaternion generalized minimal residual method.
\newblock SIAM Journal on Matrix Analysis and Applications \textbf{42}(2), 616--634 (2021).
\newblock \doi{10.1137/20M133751X}.
\newblock \urlprefix\url{https://doi.org/10.1137/20M133751X}

\bibitem{jia2013new}
Jia, Z., Wei, M., Ling, S.: A new structure-preserving method for quaternion hermitian eigenvalue problems.
\newblock Journal of Computational and Applied Mathematics \textbf{239}, 12--24 (2013)

\bibitem{le2004singular}
Le~Bihan, N., Mars, J.: Singular value decomposition of quaternion matrices: a new tool for vector-sensor signal processing.
\newblock Signal processing \textbf{84}(7), 1177--1199 (2004)

\bibitem{quatica2025}
Leplat, V., Pan, J., Ahmadi-Asl, S., Beresnev, D., Ouerdane, H., Ng, M.: Quatica: Quaternion linear algebra library (2025).
\newblock \doi{10.5281/zenodo.16910158}.
\newblock \urlprefix\url{https://github.com/vleplat/QuatIca}.
\newblock Numerical linear algebra for quaternions

\bibitem{li2023structure}
Li, T., Wang, Q.W.: Structure preserving quaternion full orthogonalization method with applications.
\newblock Numerical Linear Algebra with Applications \textbf{30}(5), e2495 (2023)

\bibitem{li2024structure}
Li, T., Wang, Q.W.: Structure preserving quaternion biconjugate gradient method.
\newblock SIAM Journal on Matrix Analysis and Applications \textbf{45}(1), 306--326 (2024)

\bibitem{liu2022randomized}
Liu, Q., Ling, S., Jia, Z.: Randomized quaternion singular value decomposition for low-rank matrix approximation.
\newblock SIAM Journal on Scientific Computing \textbf{44}(2), A870--A900 (2022)

\bibitem{liu2024fixed}
Liu, Y., Wu, F., Che, M., Li, C.: Fixed-precision randomized quaternion singular value decomposition algorithm for low-rank quaternion matrix approximations.
\newblock Neurocomputing \textbf{580}, 127490 (2024)

\bibitem{lyu2024randomized}
Lyu, C., Pan, J., Ng, M.K., Zhao, X.: Randomized low rank approximation for nonnegative pure quaternion matrices.
\newblock Applied Mathematics Letters \textbf{150}, 108940 (2024)

\bibitem{parcollet2019quaternion}
Parcollet, T., Morchid, M., Linar{\`e}s, G.: Quaternion convolutional neural networks for heterogeneous image processing.
\newblock In: ICASSP 2019-2019 IEEE international conference on acoustics, speech and signal processing (ICASSP), pp. 8514--8518. IEEE (2019)

\bibitem{ren2022randomized}
Ren, H., Ma, R.R., Liu, Q., Bai, Z.J.: Randomized quaternion qlp decomposition for low-rank approximation.
\newblock Journal of Scientific Computing \textbf{92}(3), 80 (2022)

\bibitem{rodman2014topics}
Rodman, L.: Topics in quaternion linear algebra.
\newblock Princeton University Press (2014)

\bibitem{sangwine2005quaternion}
Sangwine, S.: Quaternion toolbox for matlab.
\newblock http://qtfm. sourceforge. net/  (2005)

\bibitem{shoemake1985animating}
Shoemake, K.: Animating rotation with quaternion curves.
\newblock ACM SIGGRAPH Computer Graphics \textbf{19}(3), 245--254 (1985).
\newblock \doi{10.1145/325165.325242}

\bibitem{Soleymani2015}
Soleymani, F., Stanimirović, P.S., {Khaksar Haghani}, F.: On hyperpower family of iterations for computing outer inverses possessing high efficiencies.
\newblock Linear Algebra and its Applications \textbf{484}, 477--495 (2015).
\newblock \doi{https://doi.org/10.1016/j.laa.2015.07.010}.
\newblock \urlprefix\url{https://www.sciencedirect.com/science/article/pii/S0024379515004103}

\bibitem{song2011cramer}
Song, G.J., Wang, Q.W., Chang, H.X.: Cramer rule for the unique solution of restricted matrix equations over the quaternion skew field.
\newblock Computers \& Mathematics with Applications \textbf{61}(6), 1576--1589 (2011)

\bibitem{vince2011quaternions}
Vince, J., Vince: Quaternions for computer graphics.
\newblock Springer (2011)

\bibitem{voight2014arithmetic}
Voight, J.: The arithmetic of quaternion algebras.
\newblock preprint \textbf{4}, 16--17 (2014)

\bibitem{wang2008iterative}
Wang, M., Wei, M., Feng, Y.: An iterative algorithm for least squares problem in quaternionic quantum theory.
\newblock Computer Physics Communications \textbf{179}(4), 203--207 (2008)

\bibitem{wang2009system}
Wang, Q.W., Van~der Woude, J., Chang, H.X.: A system of real quaternion matrix equations with applications.
\newblock Linear Algebra and its Applications \textbf{431}(12), 2291--2303 (2009)

\bibitem{wei2018quaternion}
Wei, M., Li, Y., Zhang, F., Zhao, J.: Quaternion matrix computations.
\newblock Nova Science Publishers, Incorporated (2018)

\bibitem{wie1989quarternion}
Wie, B., Weiss, H., Arapostathis, A.: Quarternion feedback regulator for spacecraft eigenaxis rotations.
\newblock Journal of Guidance, Control, and Dynamics \textbf{12}(3), 375--380 (1989)

\bibitem{wolf1936similarity}
Wolf, L.A.: Similarity of matrices in which the elements are real quaternions  (1936)

\bibitem{wu2024efficient}
Wu, P., Kou, K.I., Cai, H., Yu, Z.: Efficient quaternion cur method for low-rank approximation to quaternion matrix.
\newblock Numerical Algorithms pp. 1--19 (2024)

\bibitem{yuan2016structured}
Yuan, S.F., Wang, Q.W.: L-structured quaternion matrices and quaternion linear matrix equations.
\newblock Linear and Multilinear Algebra \textbf{64}(2), 321--339 (2016)

\bibitem{zhang1997quaternions}
Zhang, F.: Quaternions and matrices of quaternions.
\newblock Linear algebra and its applications \textbf{251}, 21--57 (1997)

\bibitem{zhang2015special}
Zhang, F., Wei, M., Li, Y., Zhao, J.: Special least squares solutions of the quaternion matrix equation ax= b with applications.
\newblock Applied mathematics and computation \textbf{270}, 425--433 (2015)

\bibitem{zhang2019quaternion}
Zhang, S., Tay, Y., Yao, L., Liu, Q.: Quaternion knowledge graph embeddings.
\newblock Advances in neural information processing systems \textbf{32} (2019)

\end{thebibliography}


\end{document}